\newtheorem{lem}{Lemma}
\newtheorem{thm}[lem]{Theorem}
\newtheorem{cor}[lem]{Corollary}
\DeclareMathOperator \re {Re}
\DeclareMathOperator \im {Im}
\newcommand{\Real}{\mathbb{R}}
\newcommand{\Integers}{\mathbb{Z}}
\newcommand{\Natural}{\mathbb{N}}
\newcommand{\Complex}{\mathbb{C}}
\newcommand{\loc}{\operatorname{loc}}
\newcommand{\mch}{\mathcal{H}}
\newcommand{\mcd}{\mathcal{D}}
\numberwithin{equation}{section}
\numberwithin{lem}{section}
\newcommand{\Nev}{N_E}
\newcommand{\Ji}{\mathfrak{J}}
\title[From resolvent expansions at zero to long time wave expansions] {From resolvent expansions at zero to long time wave expansions}
\author{T. J. Christiansen, K. Datchev, and M.  Yang}
\address{Department of Mathematics, University of Missouri, Columbia, MO 65211 USA}
\email{christiansent@missouri.edu}
\address{Department of Mathematics, Purdue University, West Lafayette, IN 47907 USA}
\email{kdatchev@purdue.edu}
\address{Department of Mathematics, University of California, Berkeley, CA 97420 USA}
\email{mxyang@math.berkeley.edu}
\begin{document}
\begin{abstract}
We prove a general abstract theorem deducing wave expansions as time goes to infinity from resolvent expansions as energy goes to zero, under an assumption of polynomial boundedness of the resolvent at high energy. We give applications to 
obstacle scattering, to Aharonov--Bohm Hamiltonians, to 
scattering in a sector, and to scattering by a compactly supported potential. 
\end{abstract}
\maketitle

\section{Introduction}
Let $P$ be a self-adjoint operator on a Hilbert space $\mathcal H$ with domain $\mathcal D$. Consider the solution $u=U(t)f:=\frac{\sin{t\sqrt{P}}}{\sqrt{P}}f$ of the wave equation with initial conditions
\begin{equation}
\label{eq:waveIntro}
    \begin{cases}
    (\partial_t^2+P) u=0,\\
    u\rvert_{t=0}=0,\ \partial_tu\rvert_{t=0}=f.
    \end{cases}
\end{equation}

In Theorem \ref{t:main} we will state and prove decay and asymptotics for solutions to \eqref{eq:waveIntro} as $t \to \infty$ under abstract assumptions on the resolvent of $P$, but first we present four applications.  In each case, we give only the leading terms in the expansion, but Theorem \ref{t:main} gives a much more detailed result.

\subsection{Scattering by several convex obstacles}\label{s:convob}

Let $\mathcal{O}=\bigcup_{j=1}^J \mathcal{O}_j$, where the $\mathcal{O}_j$ are open, strictly convex bounded connected subsets of $\Real^2$ with smooth boundary.  Assume in addition that they satisfy the 
Ikawa no-eclipse condition: if $i$, $j$, and $k$ are all different, then $\overline{\mathcal{O}}_i$ does not intersect the convex hull of $\overline{\mathcal{O}}_j\cup \overline{\mathcal{O}}_k.$  Then we consider the wave 
equation in the exterior of $\mathcal{O}$ with Dirichlet boundary conditions, i.e. $P = -\Delta$ with domain $\mathcal D = H^2(\mathbb R^2 \setminus \overline{\mathcal{O}}) \cap H^1_0(\mathbb R^2 \setminus \overline{\mathcal{O}}) $.   

\begin{thm}\label{t:obex}  Let $\chi$, $f \in C_c^\infty(\Real^2\setminus \overline{\mathcal{O}})$, and $u$ be as in \eqref{eq:waveIntro}.  
Let $G\in \mathcal D_{loc}$ satisfy $PG=0$ and  $G(x,y)= \log|(x,y)| +O(1)$ as $(x,y)\rightarrow \infty$.  
Then, for any $s \in \mathbb N$, we have 
$$\left\| \chi \left(u (t) -\frac{1}{2\pi t(\log t)^2}  \left(\int_{\Real^2\setminus{\mathcal{O}} }f G \right) G\right)\right\|_{H^s} = O(t^{-1}(\log t)^{-3})\; \text{as $t\rightarrow \infty$} .$$
\end{thm}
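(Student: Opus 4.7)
The natural route is to apply the general Theorem \ref{t:main} to $P=-\Delta$ on $\mathbb R^2\setminus\overline{\mathcal O}$ with Dirichlet boundary conditions. This reduces the problem to verifying, for the cutoff resolvent $\chi(P-\lambda^2)^{-1}\chi$, a low-energy expansion near $\lambda=0$ together with a polynomial bound at high energies, and then reading off the long-time wave expansion from the abstract theorem.

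\textbf{High-energy bound.} The Ikawa no-eclipse condition enters precisely here. For a finite disjoint union of strictly convex smooth obstacles in $\Real^2$ satisfying no-eclipse, Ikawa's work (with refinements by Burq, Stoyanov, and Nonnenmacher--Zworski) gives a logarithmic resonance-free neighborhood of the real axis with a polynomial bound on the cutoff resolvent there. This supplies the high-energy hypothesis of Theorem \ref{t:main}.

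\textbf{Low-energy expansion.} The main work is a Jensen--Kato-type expansion as $\lambda\to 0$. The 2D free resolvent, with kernel $\tfrac{i}{4}H_0^{(1)}(\lambda|x-y|)$, develops a $\log\lambda$ singularity at the origin. Realizing the exterior Dirichlet resolvent as a perturbation of the free resolvent through boundary layer potentials, and inverting the resulting Fredholm operator on a compact cross-section, produces an expansion of the form
\begin{equation*}
\chi R(\lambda)\chi \;=\; \frac{1}{\log\lambda + c}\,T_{-1} \;+\; T_0 \;+\; \frac{1}{\log\lambda + c}\,T_1 \;+\; \cdots
\end{equation*}
valid to any polynomial order in $(\log\lambda)^{-1}$, where $c\in\Complex$ is a fixed constant depending on $\mathcal O$ and $T_{-1}$ is a rank-one operator. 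The hypothesis $PG=0$ with $G(x,y)=\log|(x,y)|+O(1)$ determines $G$ uniquely and identifies $T_{-1}$ as a universal multiple of $|G\rangle\langle G|$, obtained by matching the growth of $G$ at infinity with the constant term in the expansion of $H_0^{(1)}$. Plugging this into Theorem \ref{t:main}, the leading resolvent contribution $(\log\lambda+c)^{-1}T_{-1}$, whose jump across the real axis near $\lambda=0$ is of size $(\log\lambda)^{-2}$, produces the leading time-asymptotic term $\frac{1}{2\pi t(\log t)^2}(\int f G)\,G$, with the next order of the expansion yielding the error $O(t^{-1}(\log t)^{-3})$.

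\textbf{Main obstacle.} The delicate step is the low-energy expansion, and more specifically the identification of the leading coefficient in the concrete form $|G\rangle\langle G|$ with the stated constant $1/(2\pi)$. Existence of the expansion to arbitrary order in $(\log\lambda)^{-1}$ is by now standard; the actual effort is the bookkeeping that relates the abstract rank-one operator $T_{-1}$ output by the parametrix construction to the canonical generalized eigenfunction $G$ characterized by its asymptotic normalization at infinity. The no-eclipse hypothesis plays no role in this low-energy analysis; it is needed only for the high-energy bound.
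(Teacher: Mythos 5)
Your overall architecture is the same as the paper's: verify the hypotheses of Theorem \ref{t:main}, take the low-energy expansion with leading singular term a rank-one multiple of $G \otimes G$ divided by a shifted logarithm, and read off the $t^{-1}(\log t)^{-2}$ leading asymptotic via the contour integral of $\log^{-1}(b\lambda)$. Your low-energy discussion matches the paper's route (the paper cites \cite[Theorem 1 and (1-2)]{cd}, which is obtained by exactly the boundary-layer/Fredholm argument you sketch, with $A_{0,-1}=\frac{1}{2\pi}G\otimes G$), and the remaining bookkeeping you gloss over (the tail of the series and the subleading $\log^{-3}$ corrections) is handled by Lemmas \ref{l:asylog} and \ref{l:tail}.

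The genuine gap is in your high-energy step. First, the input you invoke does not exist in the stated generality: Ikawa's results and the Nonnenmacher--Zworski/Petkov--Stoyanov refinements give a resonance-free strip only under an additional dynamical hypothesis (negative topological pressure of the billiard flow), which Theorem \ref{t:obex} does not assume; Burq's unconditional region is exponentially small and yields only logarithmic local energy decay, far too weak here. What is actually needed, and what the paper uses, is Vacossin's recent theorem \cite{vac0,vac} (via the fractal uncertainty principle) that \emph{any} finite union of strictly convex planar obstacles satisfying only no-eclipse has a resonance-free strip of fixed width with polynomial resolvent bounds. Second, a ``logarithmic resonance-free neighborhood'' would not suffice even if available: Assumptions (2)--(3) of Theorem \ref{t:main} require continuation to a strip $\{\im\lambda>-c_0\}$ of fixed width so that the contour can be shifted to produce the $O(e^{-ct})$ remainder, which must be negligible against $t^{-1}(\log t)^{-3}$. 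Third, ``a polynomial bound on the cutoff resolvent'' does not by itself supply Assumption (3), which demands that $\|\chi R(\lambda)\chi\|_{\mathcal D^p\to\mathcal D^q}$ tend to zero and be integrable in $\re\lambda$; one must trade Sobolev regularity of the data for decay in $\lambda$, as the paper does with the geometric-sum identity for $\chi R(\lambda)\chi$ on $\mathcal D^{q+m}$ and commutator estimates. Without these three points the argument does not close.
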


Theorem \ref{t:obex} is proved in Section \ref{s:convob2}, using low energy resolvent results from \cite{cd}, and high energy results from \cite{vac,vac0}.   We remark
that \cite[Theorem 3]{vac} contains a result for local energy decay for solutions of the 
wave equation; our decay rate is slightly better thanks to our more detailed analysis of the resolvent near zero.

\subsection{Aharonov--Bohm Hamiltonians}
\label{s:ab1}
Let
\begin{gather*}
     P = (-i \vec \nabla - \vec A)^2, \qquad \vec A = \sum_{k=1}^n \alpha_k \vec A_0(x-x_k,y-y_k), \quad
     \vec A_0(x,y) = \frac{(-y,x)}{x^2+y^2} = \vec \nabla \arg(x+iy),
\end{gather*}
where $\alpha_k, x_k, y_k\in \Real$. Let $s_k = (x_k,y_k)$ and $S = \{s_1,\dots,s_k\}$ are poles of the vector potential $\Vec{A}$. Assume no three poles are collinear and equip $P$ with its Friedrichs domain $\mathcal D \supset C_c^\infty(\mathbb R^2 \setminus S)$ on $\mathcal H = L^2(\mathbb R^2)$ (See \cite{at,ds,cf,f24} for more  on extensions of the Aharonov--Bohm Hamiltonians).  The form of the wave asymptotics is governed by the value of the \textit{total flux} 
\[
 \beta= \alpha_1+ \cdots + \alpha_n.
\]

\begin{thm} \label{t:wintro}Let $\chi$, $f \in C_c^\infty(\mathbb R^2 \setminus S)$, and $u$ be as in \eqref{eq:waveIntro}.    Then, for any $s \in \mathbb N$, as $t\rightarrow \infty$,
 \begin{enumerate}
\item if $\beta$ is  half an odd integer, then there is a constant $c>0$ so that $\| \chi u(t)\|_{H^s} = O(e^{-c t})$.
\item if $2\beta\not \in \Integers$, let $\mu_m= \min(\beta -\lfloor \beta \rfloor, 1+\lfloor \beta \rfloor -\beta)$ and $\mu_M= \max(\beta -\lfloor \beta \rfloor, 1+\lfloor \beta \rfloor -\beta)$.   Then there is a  function $\tilde{u}\in \mathcal{D}_{\loc}$ such that  
\begin{equation*}
\| \chi (u(t) -\tilde{u}t^{-1-2\mu_m})\|_{H^s} = O(t^{-1-4\mu_m})+ O(t^{-1-2\mu_M}).
\end{equation*}
\item if $\beta \in \Integers$, then there is a  function $\tilde{u}\in \mathcal{D}_{\loc}$ such that 
$$ 
\| \chi (u(t) -\tilde{u}t^{-1}(\log t)^{-2})\|_{H^s} = O(t^{-1}(\log t)^{-3}).$$
\end{enumerate}
\end{thm}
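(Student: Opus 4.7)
The strategy is to verify the hypotheses of Theorem \ref{t:main} for this $P$ and read off the three wave asymptotics from three structurally different resolvent expansions at zero. Specifically, I need (a) a polynomial bound on the cut-off resolvent $\chi(P-\lambda^2)^{-1}\chi$ as $\lambda\to\infty$, and (b) a detailed expansion of the same cut-off resolvent as $\lambda\to 0^+$.

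For (a), outside a large disk $P$ differs from $-\Delta$ only by a smooth short-range vector potential, so high-energy bounds reduce to the geometry of the broken-ray flow on $\Real^2\setminus S$; the no-three-collinear assumption ensures this flow is non-trapping, giving polynomial bounds through standard propagation estimates. For (b), a gauge transformation by a local antiderivative of $\vec A$ makes $P$, outside a large disk, unitarily equivalent to $-\Delta$ conjugated by a multi-valued phase of monodromy $e^{2\pi i\beta}$ around infinity. A polar-coordinate separation reduces the model operator to a sum of radial Bessel operators of order $|m+\beta|$ for $m\in\Integers$, and the small-$\lambda$ expansions of $J_{|m+\beta|}(\lambda r)$ and $H^{(1)}_{|m+\beta|}(\lambda r)$ govern the threshold behavior. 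Three regimes arise from the arithmetic of these indices: if every $|m+\beta|$ is a half-integer, i.e.\ $\beta$ is half an odd integer, the Hankel functions are elementary and the resolvent extends analytically across $\lambda=0$; if $2\beta\notin\Integers$, the smallest $|m+\beta|$ equals $\mu_m$ and contributes a leading threshold term of order $\lambda^{2\mu_m}$ with next corrections of orders $\lambda^{4\mu_m}$ and $\lambda^{2\mu_M}$; if $\beta\in\Integers$, integer-order Bessel functions reproduce the $(\log\lambda)^{-1}$ logarithmic structure of the free 2D Laplacian. A standard resolvent identity then transfers these model expansions to the full $P$.

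Feeding these expansions into Theorem \ref{t:main} yields the stated asymptotics: a resolvent coefficient of order $\lambda^{2\mu}$ produces a wave term of order $t^{-1-2\mu}$, and a $(\log\lambda)^{-1}$ coefficient produces $(\log t)^{-1}$, giving cases (2) and (3) with $\tilde u$ obtained by contracting $f$ against the corresponding distributional kernel. For case (1), analytic extension of the resolvent across zero gives super-polynomial decay from Theorem \ref{t:main}, which upgrades to exponential decay once combined with a resonance-free strip; the latter follows from the explicit structure of half-integer Hankel functions together with the non-trapping flow.

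The main obstacle is carrying out the low-energy expansion rigorously in the genuine multi-pole setting: the single-pole model is classical via explicit Bessel functions, but the gauge reduction plus resolvent identity must be arranged so that the leading coefficients are identified as well-defined elements of $\mathcal{D}_{\loc}$ and the remainder lies in the weighted function-space framework required by Theorem \ref{t:main}, while case (1) requires the additional step of establishing the resonance-free strip.
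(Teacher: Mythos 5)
Your proposal follows essentially the same route as the paper: verify Assumptions (1)--(4) of Theorem \ref{t:main} (the paper imports the high-energy strip estimate from \cite{my} and the three-case low-energy expansion \eqref{e:abresexp} from \cite{cdy}, the latter obtained by exactly the gauge/Bessel separation you describe), then convert a power $\lambda^{2\mu}$ into $t^{-1-2\mu}$ via Lemma \ref{l:asylog} and control the tails as in Lemma \ref{l:tail2}.

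Two points need correcting. First, in case (1) no ``upgrade'' from super-polynomial to exponential decay is required: once every exponent in the expansion is a non-negative integer and there are no logarithms, Corollary \ref{cor:int} gives $\chi u_z\equiv 0$, and the remainder in Theorem \ref{t:main} is already $O(e^{-ct})$ because the contour has been pushed into the strip $\im\lambda>-c_0$; the resonance-free strip is exactly Assumptions (2)--(3), not an additional ingredient to be combined afterwards. Second, and more substantively, your claim that a $(\log\lambda)^{-1}$ coefficient produces a $(\log t)^{-1}$ wave term is wrong as stated: by \eqref{e:asy3} and \eqref{eq:cnukm} the candidate leading coefficient is $c_{0,-1,0}$, which vanishes because $c_{\nu,k,0}=0$ for $\nu\in\Natural_0$, so the true leading contribution is $c_{0,-1,1}\,t^{-1}(\log t)^{-2}=2\pi\,t^{-1}(\log t)^{-2}$. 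This cancellation is precisely why case (3) reads $t^{-1}(\log t)^{-2}$ rather than $t^{-1}(\log t)^{-1}$; the naive rescaling heuristic $\log(b\lambda)\approx -\log t$ misses it, even though the exponents you ultimately wrote down are the correct ones. Finally, your high-energy paragraph is a gloss rather than an argument: away from the poles the classical flow of the Aharonov--Bohm Hamiltonian is the free flow and is trivially non-trapping, so the whole difficulty lies in diffractive trajectories joining the poles (which can be periodic already for two poles); the polynomial bound in a strip is the nontrivial content of \cite{my}, where the non-collinearity hypothesis enters, and it is not obtained by standard non-trapping propagation estimates.
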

Theorem \ref{t:wintro} is proved in Section \ref{s:ab2}, using low energy resolvent results from \cite{cdy}, and high energy results from \cite{my}.   
 
\subsection{Laplacians on Cones}

Laplacians on cones %provide a family of examples in 
are operators in many respects similar to the Aharonov--Bohm Hamiltonians.  For simplicity, we treat just one family of examples. 

Fix $L>0$ such that $\alpha=\pi/L$ is Diophantine (see Section \ref{s:cones} for the definition); for example we may take $L$ rational, or $\alpha$ irrational algebraic.  Equip $X=(0,\infty)\times(0,L)$ with the metric $g=dr^2+r^2dy^2$. Let $P=-\Delta$ and let  $\mathcal D$  be the Friedrichs domain. This $P$  corresponds to the Dirichlet Laplacian on a sector.%{\color{magenta} See \cite{ct1,ct2} and \cite{mw1} for Laplacians on cones and propagation of singulatities of wave equations.} 
%Let $(Y,h)$ be a compact  Riemannian manifold. %, with or without boundary. 
%Let $(X,g)$ be the cone over $Y$, i.e. $X=(0,\infty)\times Y$, and $g=dr^2+r^2h$.  

%For simplicity, we treat just the case that $Y$ is an interval $[0,L]$, with $\pi/L$ Diophantine (it is enough if $\pi/L$ is irrational and algebraic, or if $L$ is rational), and let $P= -\Delta$, with $\mathcal D$ the Friedrichs domain. Then $X$ is isometric to a sector and $\mathcal D$ corresponds to Dirichlet boundary conditions.

\begin{thm}\label{t:cones}
Let $\chi, \ f \in C_c^\infty(X)$, and $u$ be as in \eqref{eq:waveIntro}. Then, for any $s\in \mathbb N$, as $t \to \infty$,
\[
\| \chi( u(t,r,y) - c_1 t^{-1 - 2\alpha} r^{\alpha} \sin(\alpha y )\|_{H^s} =  O(t^{- 3 - 2 \alpha}) + O(t^{-1 - 4 \alpha}),
\]
where
\[
c_1 =  \frac {2  \Gamma(\frac 1 2 + \alpha) \cos(\pi \alpha)} {\pi^{3/2}\Gamma(\alpha) } \int_0^\infty   \int_0^L f(\tilde r,\tilde y) \sin(\alpha \tilde y) \tilde r ^{1+\alpha}\, d \tilde y d \tilde r, \qquad \alpha = \frac \pi L.
\] 
\end{thm}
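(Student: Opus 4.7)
The plan is to apply the abstract Theorem \ref{t:main} to $P = -\Delta$ on $X$ with its Friedrichs domain. This reduces the problem to establishing (i) a low-energy expansion of the resolvent $(P-\lambda^2)^{-1}$ between suitable weighted spaces, and (ii) a polynomial high-energy resolvent bound. Once these are verified, the abstract theorem produces the wave expansion, and the remaining task is to identify the leading constant $c_1$ explicitly.

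For (i), the natural reduction is separation of variables. The Dirichlet Laplacian on $(0,L)$ has complete orthonormal eigenbasis $e_k(y) = \sqrt{2/L}\sin(k\alpha y)$ with eigenvalues $(k\alpha)^2$, so $P$ decomposes as a direct sum $\bigoplus_{k\ge 1} P_k$ of radial operators $P_k = -\partial_r^2 - r^{-1}\partial_r + (k\alpha)^2 r^{-2}$ on $L^2((0,\infty), r\, dr)$, each equipped with its Friedrichs extension. The resolvent of $P_k$ is explicit in terms of $J_{k\alpha}$ and $H^{(1)}_{k\alpha}$, so Bessel asymptotics yield a low-energy expansion for $(P_k - \lambda^2)^{-1}$ whose lowest non-polynomial term is of order $\lambda^{2k\alpha}$, with rank-one coefficient spanned by $r^{k\alpha}\sin(k\alpha y)$. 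In particular, the overall leading non-polynomial term of $R(\lambda^2) = (P-\lambda^2)^{-1}$ is of order $\lambda^{2\alpha}$ and is supported on the $k=1$ mode.

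The main obstacle is assembling these mode-by-mode expansions into a genuine operator-valued resolvent expansion between the weighted spaces required by Theorem \ref{t:main}, and this is where the Diophantine hypothesis on $\alpha$ is essential. The coefficients of the polynomial powers $\lambda^{2j}$ in the expansion of $(P_k - \lambda^2)^{-1}$ grow like $|(k\alpha)^2 - j^2|^{-1}$ as $k\alpha \to j$, so summation over $k$ requires a small-denominator bound $|k\alpha - j| \ge c k^{-N}$, which is exactly the content of the Diophantine assumption. With this control in hand, we obtain an operator-valued expansion of $R(\lambda^2)$ in powers $\lambda^{2\alpha}, \lambda^2, \lambda^{4\alpha}, \lambda^{2+2\alpha}, \dots$, suitably reordered. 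The high-energy bound (ii) is standard on this geometry and follows from propagation of singularities for the wave equation on the cone, analogously to the treatment of \cite{my} in the Aharonov--Bohm setting.

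Theorem \ref{t:main} then converts this resolvent expansion into the stated wave expansion. The $t^{-1-2\alpha}$ term arises from the $\lambda^{2\alpha}$ resolvent coefficient via the abstract transformation rule of the main theorem; evaluating the rank-one coefficient using $J_\nu(z) = (z/2)^\nu/\Gamma(\nu + 1) + O(z^{\nu + 2})$ together with the Gamma duplication and reflection identities produces the closed form of $c_1$ stated in the theorem, with the $L^2(X, r\, dr\, dy)$ pairing $\int_0^\infty\!\int_0^L f(\tilde r, \tilde y)\sin(\alpha \tilde y)\tilde r^{1+\alpha}\, d\tilde y\, d\tilde r$ appearing as the natural projection onto the zero-energy solution $r^\alpha \sin(\alpha y)$. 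The two error terms correspond to the next contributions in the resolvent expansion: $O(t^{-3-2\alpha})$ comes from the $\lambda^{2+2\alpha}$ correction within the $k=1$ mode, while $O(t^{-1-4\alpha})$ comes from the leading $\lambda^{4\alpha}$ contribution of the $k=2$ mode; exactly one dominates depending on whether $\alpha < 1$ or $\alpha > 1$.
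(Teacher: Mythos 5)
Your proposal follows essentially the same route as the paper: separation of variables into angular modes, explicit Bessel-function resolvents giving the low-energy expansion in powers $\lambda^{2\alpha j+2\ell}$, the Diophantine hypothesis controlling the small denominators (which in the paper enter as $\sin(\pi\alpha j)^{-1}$ via the Hankel function), and then Theorem \ref{t:main} together with the Gamma duplication and reflection identities to extract $c_1$ and the two error terms $O(t^{-3-2\alpha})$ (from $\lambda^{2\alpha+2}$ in the first mode) and $O(t^{-1-4\alpha})$ (from $\lambda^{4\alpha}$ in the second mode). The only divergence is the verification of the high-energy Assumption (3): the paper gets $\|\chi R(\lambda)\chi\|_{L^2\to L^2}=O(|\lambda|^{-1})$ directly from large-argument and Olver uniform Bessel asymptotics as in \cite{dgs}, rather than from propagation-of-singularities arguments on the cone, but either route is viable.
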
 

Theorem \ref{t:cones} is proved in Section \ref{s:cones} using Bessel function formulas. The same approach works for more general cones $X=(0,\infty) \times Y$, $g=dr^2 + r^2h$, $(Y,h)$ a compact manifold. In particular we expect Theorem \ref{t:cones} to hold as stated without the convenient simplifying assumption that $\alpha$ is Diophantine. However, if there is an integer $n$ such that $n^2 - (\dim Y -1)^2/4$ is an eigenvalue of $Y$, then the  
resolvent expansion \eqref{e:ajkexp} in the 
hypotheses of Theorem \ref{t:main} contains logarithmic terms. 

More general manifolds with conic ends, or black-box perturbations of these, can also be treated using Theorem \ref{t:main}. In that case low energy resolvent expansions follow as in Section 7C of \cite{MuSt}, and high energy bounds can be obtained under appropriate hypotheses of mild trapping or absence of trapping by \cite{vz,dv,bw13,cdmrl}. For such results, it is the behavior of the operator at infinity that governs the general form of the expansion, not the behavior near any conic singularities; this well-known fact can be seen by applying the arguments of \cite{MuSt} or \cite{cd2} to the general formulas of e.g.~\cite[Theorem 2.1]{by20}. Some such expansions, with corresponding Schr\"odinger asymptotics, can be found in \cite{xpwang}. Foundational results on  Laplacians on cones, including propagation of singulatities of wave equations, were obtained in \cite{ct1,ct2} and \cite{mw1}, 
with further applications in \cite{bm22}.

%[Figure out how and where to cite Cheeger--Taylor.  Also, do we want to say someplace--here or elsewhere--that the form of the expansion is generally understood to be determined by the behavior of the operator at infinity?  (as opposed here to the cone point, which those not in the know might think is important)].

%{\color{magenta}The mathematical studies of propagation of singularities on cones was initiated by Cheeger--Taylor \cite{ct1,ct2} and later generalized by Melrose--Wunsch \cite{mw1}.} 

%{\color{magenta}It is also worthwhile mentioning that the form of the expansion is determined by the behavior of the operator near the infinite end of the cone rather than the cone tip region. This can be seen from e.g.~the form of the resolvent in \cite[Theorem 2.1]{by20} or using the technique of \cite{cd2} to establish the corresponding resolvent identity and obtain the resolvent expansion.} 

\subsection{Schr\"{o}dinger operators}
Schr\"{o}dinger operators on $\Real^2$ provide a rich class of examples of operators with interesting resolvent 
behavior near $0$.  Not
only may they have negative eigenvalues, but there are three different possibilities for unbounded behavior of the cut-off resolvent  at $0$, and these are reflected in the behavior of solutions of the wave equation.  

Let $P=-\Delta +V$ on $L^2(\Real^2)$, with $V\in C_c^\infty(\Real^2;\Real)$ and $\mcd=H^2(\Real^2).$ Let 
$E_1<\cdots<E_{\Nev}<0$ denote the negative eigenvalues of $P$, with corresponding projections $\Pi_1,\dots ,\Pi_{\Nev}$.
Set
\begin{equation}
    \label{eq:Gl}
    \mathcal{G}_\ell=\{ g\in H^2_{\loc}|\; Pg=0\; \text{and}\; g(x,y)=O(|(x,y)|^{\ell})\;\text{as $|(x,y)|\rightarrow \infty$}\}.
\end{equation}
A nonzero element of $\mathcal{G}_{-2}$ is an eigenfunction of $P$ with eigenvalue $0$.  An element of 
$\mathcal{G}_{-1}\setminus \mathcal{G}_{-2}$ is sometimes called a $p$-resonant state, while an element of 
$\mathcal{G}_{0}\setminus \mathcal{G}_{-1}$ is called an $s$-resonant state.
\begin{thm}\label{t:schroex}
Let $\chi,\; f\in C_c^\infty(\Real^2),\; V\in C_c^\infty(\Real^2;\Real)$ and let $u$ be as in \eqref{eq:waveIntro}.  Let $s \in \mathbb N$. Then 
$$\left\|\chi u(t) - \chi (u_d(t)+ u_{z,l}(t))\right\|_{H^s} =O(t^{-1 } (\log t)^{-2}).$$
Here
\begin{enumerate}
    \item $u_d(t)=\sum _{\ell=1}^{\Nev} \frac{\sinh(\sqrt{-E_\ell}t)}{\sqrt{-E_\ell}}\Pi_\ell f.$
    \item There is an integer $M$, $0\leq M \leq 2,$  functions $\Ji_m\in C^\infty(\Real_+)$ ($1\leq m\leq M$) and functions $U_{-2},\; U_{\omega_m},$ and $U_0\in H^2_{\loc}(\Real^2)$ such that 
    $$u_{z,l}(t,x,y) = t U_{-2}(x,y)+ \sum_{m=1}^M\Ji _m(t) U_{_{\omega_m}}(x,y)+ t^{-1} U_{0}(x,y).$$
    We have $U_{-2}\in \mathcal{G}_{-2}$, $U_{\omega_m}\in \mathcal{G}_{-1}\setminus \mathcal{G}_{-2}$, and there are $\tilde{U}_0\in (\mathcal{G}_{0}\setminus \mathcal{G}_{-1}) \cup\{0\}$, $\tilde{U}_{-2}\in (\mathcal{G}_{-2}\setminus \mathcal{G}_{-3})\cup\{0\}$ such that $U_0=\tilde{U}_0+\tilde{U}_{-2}.$
    Moreover, $\Ji_m(t)= t((\log t)^{-1}+O((\log t)^{-2}))$ as $t\rightarrow \infty$.
\end{enumerate}
\end{thm}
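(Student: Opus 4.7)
The plan is to deduce Theorem \ref{t:schroex} from the abstract Theorem \ref{t:main}, so the work breaks into two pieces: separating the contribution of the negative spectrum from that of $[0,\infty)$, and then verifying the hypotheses of Theorem \ref{t:main} on the non-negative part for $P=-\Delta+V$ with $V\in C_c^\infty(\Real^2;\Real)$.

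First I would handle $u_d$. Since $V$ is compactly supported, $P$ has finitely many negative eigenvalues $E_1<\cdots<E_{N_E}<0$ with smooth, exponentially decaying eigenfunctions, and the spectral projections $\Pi_\ell$ are finite-rank. Applying the functional calculus to $U(t)=\sin(t\sqrt{P})/\sqrt{P}$ on the negative spectral subspace immediately gives $\sum_\ell \Pi_\ell U(t)f = u_d(t)$ as stated, with a spectral gap between the negative eigenvalues and $0$ so that this piece is treated cleanly and separately from the near-zero analysis. After subtracting $u_d$ we may replace $f$ by $(I-\sum_\ell \Pi_\ell)f$ and work with the spectral restriction of $P$ to $[0,\infty)$.

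Next, to apply Theorem \ref{t:main} to this restricted problem I need (i) a polynomial bound on the cut-off resolvent at high energy and (ii) a low-energy expansion of $\chi R(\lambda)\chi$ at $\lambda=0$ in the form required by \eqref{e:ajkexp}. For (i), since $V$ has compact support, $P$ is a compactly supported real-valued perturbation of $-\Delta$ on $\Real^2$; standard limiting absorption / black-box results (e.g.\ of Vodev/Burq/Cardoso--Vodev type) give a polynomial bound on $\chi(P-\lambda^2)^{-1}\chi$ as $|\re \lambda|\to\infty$ in a strip, which is all Theorem \ref{t:main} needs at high energy. For (ii), I would invoke the Jensen--Nenciu-type expansion of the free resolvent $R_0(\lambda)$ on $\Real^2$, whose kernel is built from $H_0^{(1)}(\lambda|x-y|)$ and so involves powers of $\log\lambda$. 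Iterating the second resolvent identity $R(\lambda)=(I+R_0(\lambda)V)^{-1}R_0(\lambda)$ and carefully inverting the operator $I+R_0(\lambda)V$ in the presence of a kernel at $\lambda=0$, following the cases treated in \cite{cdy} for Aharonov--Bohm (or the classical analysis of Bollé--Gesztesy--Wilk and Jensen--Nenciu), yields an expansion
\[
\chi R(\lambda)\chi = \lambda^{-2}A_{-2} + \sum_{m=1}^M \frac{A_{\omega_m}}{\omega_m(\lambda)} + A_0 + O(\lambda^\varepsilon)
\]
where $\omega_m(\lambda)$ are log-type functions, $A_{-2}$ projects onto the zero-eigenspace, the $A_{\omega_m}$ have ranges in the $p$-resonant space, and $A_0$ carries the $s$-resonance. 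Identifying these ranges with $\mathcal{G}_{-2}$, $\mathcal{G}_{-1}\setminus \mathcal{G}_{-2}$, and $\mathcal{G}_0\setminus \mathcal{G}_{-1}$ is done by using that elements of $\ker(I+R_0(0)V)$ extend naturally to generalized zero modes whose growth at infinity is read off from the $\log|x-y|$ behavior of the free Green function.

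Having verified the hypotheses of Theorem \ref{t:main}, its conclusion immediately produces an expansion of $\chi u(t)$ with a leading $t$-term coming from $A_{-2}$, logarithmically-modified terms $\Ji_m(t)$ of order $t/\log t$ coming from the $A_{\omega_m}/\omega_m$ pieces, and a $t^{-1}$ term from $A_0$; the asymptotics $\Ji_m(t)=t((\log t)^{-1}+O((\log t)^{-2}))$ are obtained by expanding $1/\omega_m(\lambda)$ in powers of $1/\log\lambda$ and using the standard Mellin/Laplace-type identities linking $(\log\lambda)^{-k}$ on the spectral side to $t^{-1}(\log t)^{-k}$-type functions on the time side, together with the splitting $U_0=\tilde U_0+\tilde U_{-2}$ dictated by the fact that $s$-resonant states and some $O(1)$ corrections from the zero eigenspace both contribute to the $t^{-1}$ coefficient. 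The remainder $O(t^{-1}(\log t)^{-2})$ is the worst of the errors produced by Theorem \ref{t:main} on the $A_{\omega_m}/\omega_m$ terms.

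The main obstacle, in my view, will not be the resolvent expansion itself (which is by now textbook material in the model cases and is carried out for closely related operators in \cite{cd,cdy}) but rather the careful bookkeeping needed to identify the coefficient operators $U_{-2}, U_{\omega_m}, U_0$ with the $\mathcal{G}_\ell$ spaces in a way that makes the splitting $U_0=\tilde U_0+\tilde U_{-2}$ canonical. In particular, distinguishing the $\mathcal{G}_{-2}\setminus \mathcal{G}_{-3}$ contribution inside $U_0$ from the genuine $s$-resonant contribution requires tracking the subleading $\log\lambda$ coefficients in $R_0(\lambda)$ and recognizing, via the asymptotics of the free Green function, which generalized zero modes pick up each power of the expansion — this is the analogue in our setting of the Jensen--Nenciu classification, and is where the work lies.
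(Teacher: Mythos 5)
Your overall strategy is the paper's: peel off $u_d$ via the spectral theorem, verify hypotheses (1)--(3) of Theorem \ref{t:main} using the identity $\chi R(\lambda)\chi=\chi R_0(\lambda)\chi(I+VR_0(\lambda)\chi)^{-1}$ together with the free-resolvent bound in a strip, obtain the low-energy expansion from a Jensen--Nenciu/\cite{cd2}-type analysis, and then convert term by term into time asymptotics. (The paper cites \cite[Theorem 1, Proposition 4.1]{cd2} for the expansion rather than re-deriving it, and its high-energy input is just the Birman--Schwinger argument rather than Vodev/Burq-type estimates, but these are cosmetic differences.)

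There are, however, two concrete problems in your low-energy bookkeeping that would derail the final step if carried out as written. First, your displayed expansion $\chi R(\lambda)\chi=\lambda^{-2}A_{-2}+\sum_m A_{\omega_m}/\omega_m(\lambda)+A_0+O(\lambda^{\varepsilon})$ attributes the $t^{-1}U_0$ term to the constant coefficient $A_0$. But by \eqref{eq:k=0} with $\nu=0$, $k=0$, a constant term contributes \emph{zero} to $u_z$; the $t^{-1}U_0$ term in Theorem \ref{t:schroex} comes from a term $A_{0,1}\log\lambda$ (present in the paper's \eqref{eq:soex} but absent from your expansion), via $\lim_{\delta\to0^+}\int_{\gamma(\delta,c)}e^{-it\lambda}\log\lambda\,d\lambda=-2\pi t^{-1}+O(e^{-ct})$. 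Without that logarithmic term you cannot produce $U_0$ at all. Second, the remainder in the expansion at $\lambda=0$ is not $O(\lambda^{\varepsilon})$ for any $\varepsilon>0$: the corrections at order $\lambda^0$ are inverse powers of $\log\lambda$, and it is precisely the $(\log\lambda)^{-1}$ term (with $c_{0,-1,0}=0$, $c_{0,-1,1}=2\pi$) that produces the $O(t^{-1}(\log t)^{-2})$ error in the theorem. Your attribution of that error to the $A_{\omega_m}/\omega_m$ pieces is also off, since $\Ji_m(t)$ as defined in \eqref{eq:Jm} is retained \emph{exactly} inside $u_{z,l}$ and those terms only contribute exponentially small errors from the contour truncation. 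Finally, note that Theorem \ref{t:main} as stated requires the full convergent series \eqref{e:ajkexp}, not a finite expansion with a power-law remainder, so you either need the complete expansion (as in \cite{cd2}) or an explicit tail/remainder argument in the style of Lemma \ref{l:rem} with derivative bounds.
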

An explicit integral expression for $\Ji_m$ is given in \eqref{eq:Jm}.

If $\mathcal{G}_0=\{0\}$, then the 
expansion of $u(t)-u_d(t)$ resembles that of Dirichlet 
obstacle case of Theorem \ref{t:obex}.  That is, 
there is a function $G\in H^2_{\loc}(\Real^2)$ with
$(-\Delta +V)G=0$, $G(x,y)=\log|(x,y)|+O(1)$ as $|(x,y)|\rightarrow \infty$, and a constant $c(f)$ such that 
$\|\chi u(t)-\chi (u_d(t) + c(f)G/t(\log t)^2))\|_{\mathcal D^q}  = O(t^{-1}(\log t)^{-3}).$

\subsection{Further background and context}

Wave decay results similar to ours have been much studied for decades. The field is too wide-ranging to survey here. Let us mention the seminal work of Morawetz \cite{m61}, and the surveys and works in \cite[Epilogue]{lp89}, \cite[Chapter X]{va}, \cite{dr}, \cite{tat}, \cite{dz}, \cite{vasy}, \cite{sch21}, \cite{klainerman}, \cite{hin}, \cite{lo24}.

In this paper we revisit and extend some of the abstract results of \cite[Chapter X]{va}, and apply them  to  our examples using the resolvent expansions of \cite{MuSt, cd, cd2, cdy}.

For obstacle scattering, in the case of a non-trapping obstacle, Melrose--Sj\"ostand \cite{mel78} showed that the local energy decay is exponential in odd dimension and polynomial in even dimensions. In the case of trapping, Ikawa (under a no-eclipse condition recalled in Section \ref{s:convob}) showed exponential decay in three dimension, for two obstacles \cite{ika1} and for more obstacles under a dynamical assumption involving the topological pressure of the billiard flow \cite{ika2} (see also \cite{ps}). For general trapping in obstacle scattering, Burq \cite{burq98} showed that the decay rate is logarithmic for compactly supported initial data. In the case of dimension two (and more generally, of even dimensions), one cannot expect exponential decay due to the logarithmic singularity of the resolvent at $0$ and the lack of a strong Huygens principle, even in the absence of trapping.

For the Aharonov-Bohm Hamiltonian \cite{ab}, various wave decay results have only been established for the Hamiltonian with a single pole in \cite{fffp,gk,fzz,gyzz,wang23}, which has scaling and rotational symmetry. Some additional results in settings closer to our Aharonov--Bohm one include \cite{Mur,Kov2,Kov}. To the best of our knowledge, this is the first result on wave decay for the Aharonov--Bohm Hamiltonian with multiple poles.

The paper \cite{vawu} studies questions related to low energy 
behavior of solutions of the wave equation for manifolds which, at
infinity, are like the big end of a cone.
Results for Schr\"{o}dinger operators in various dimensions related to those of Theorem \ref{t:schroex} can be found in \cite{JeKa, je80,Mur, je84,jn, schlag}.

\subsection{Further results} Our methods and results can be readily extended in various directions. % but for brevity we only give a few indications of them here.

1. For brevity we compute the leading constants in detail only in Theorems \ref{t:obex} and \ref{t:cones}, but the same approach with more work would give them in Theorems \ref{t:wintro} and \ref{t:schroex} as well.

2. The solution to the more general inhomogeneous Cauchy problem 
\[
    \begin{cases}
    (\partial_t^2+P) u=F,\\
    u\rvert_{t=0}=f_0,\ \partial_tu\rvert_{t=0}=f_1,
    \end{cases}
\]
can be treated in terms of the solution to \eqref{eq:waveIntro} by writing $u(t) = U'(t)f_0 + U(t)f_1 + \int_0^t U(t-s)F(s)ds$.

3. In the theorems above, we consider only initial data in $C_c^\infty$, and estimate remainders in $H^s$, but Theorem \ref{t:main} is more precise and in particular allows us to estimate derivatives of remainders  in terms of derivatives of initial conditions. 

4. The operators $P$ in Theorems \ref{t:obex} and \ref{t:schroex} are special cases of black-box perturbations of the planar Laplacian in the sense of Sj\"ostrand and Zworski \cite{sz} (see also Chapter 4 of \cite{dz}). General low energy resolvent expansions for these are given in \cite{cd2}, and so the results can be extended to settings where there is a high energy resolvent bound analogous to \cite{vac,vac0}. Such bounds have been established in a range of situations of \textit{mild trapping}: see Section 6.6 of \cite{dz} and Section 3.2 of \cite{z17} for surveys of the field. Some more recent progress, as in \cite{vac,vac0}, is based on using a family of monodromy operators \cite{nsz11} to reduce to a one-dimensional fractal uncertainty principle \cite{dz16, bd18, djn21}; see for example a result for semiclassical Schr\"odinger operators in Theorem 3 of \cite{vac0} and one for negatively curved surfaces in \cite{tao24}.

\subsection{Plan of the paper}
In Section \ref{s:main} we state and prove our main abstract theorem, and  in Section~\ref{s:applications} we apply it to prove the theorems above.

%\subsection{Wave asymptotics} If we assume that no three $s_j$ are collinear, then by Theorem 1.1 of \cite{my}, for every $\lambda_0>0$ there is $\varepsilon > 0$ such that  $\|\chi R(\lambda) \chi\|_{L^2} \le C |\lambda|^{-1}$ when $|\re \lambda| \ge \lambda_0$ and $|\im \lambda | \le \varepsilon$.  Then we can prove asymptotics for solutions to the wave equation $(\partial_t^2 + P)w = 0 $ with smooth compactly supported initial data.

%If $\alpha_1+ \cdots + \alpha_n$ is an integer, then $w$ obeys asymptotics of the form
%\[
 %w(x,t) = \frac {c_w} {t(\log t)^2} + O\Big(\frac 1 {t(\log t)^3}\Big), 
%\]
%on compact sets. The constant $c_w$ should be easy to compute (like the leading constant in the resolvent expansion of \cite{cd}).

%The corresponding result in the noninteger case should be nicer, with the form of the expansion depending only on $\alpha_1+ \cdots + \alpha_n$.

%This could be a nice `hook' with which to open our introduction and motivate our work.

\subsection*{Acknowledgements} 
The authors would like to thank Luc Hillairet, Daniel Tataru, and Maciej Zworski for helpful discussions. 
The authors appreciate the helpful comments and corrections of
the referees.
TC and KD are grateful for Simons collaboration grants for mathematicians for travel support. MY is partially supported by the NSF grant DMS-1952939.

%\subsection{Upper bound of resonances in a sector}
%The existence of asymptotic expansion as $\lambda\rightarrow 0$, in particular, implies that in any sector $\{-\theta<\arg z \leq 0\}$, there are no resonance accumulating to zero (cf.\,\cite[Chapter IX.3]{va}). Following \cite[Formula (8.31)]{sj}, a rescaling argument should imply that for some $r_0>0$ the number of resonances in the set $\{-\theta<\arg z \leq 0\}\cap \{r_0<|z|<r\}$ is bounded above by $C r^n$ as $r\rightarrow \infty$ for some constant $C>0$. Combining these two results yields an upper bound for number of resonances of in the sector $\{-\theta<\arg z \leq 0\}$ for any $\theta>0$.

\subsection*{List of Notation}
\begin{itemize}
    \item $\Natural_0 =\{0,1,2,\dots\}$ and $\Natural  = \{1,2,3,\dots\}$.
    \item The self-adjoint operator $P$ is defined on a Hilbert space $\mathcal H$ with domain $\mathcal D$,  and for $s  \ge 0 $ we put $\mathcal D^s = (P+i)^{-s} \mathcal H$. The resolvent is given by $R(\lambda)=(P-\lambda^2)^{-1}$, and maps $\mathcal H \to \mathcal H$ when $\im \lambda >0$ and $\lambda^2$ is not in the spectrum of $P$. 
%    \item $\mathcal D^s$ is the domain of the operator $(P+M)^s$, defined by the functional calculus, where $M\ge1$ is chosen such that $P +M \ge 1$ in the sense of operators.
    %\item $\mathcal{D}_{\loc}=\{u: \chi u\in \mathcal{D} \text{ for any } \chi\in\CI_c(\Real^2)\}$. 
    \item $\sum_{j,k}   = \sum_{j = j_0}^\infty \sum_{k \in \mathbb Z}$, with $j_0$ as in Assumption (4).
    \item The integration contours $\gamma(\delta,c)$, $\Gamma(\delta,c)$ and $\gamma(\delta,\infty)$ are given in Figure \ref{f:contours}.
    \item The spaces $\mathcal{G}_\ell$ of polynomially bounded solutions to $Pu=0$ are given in equation \eqref{eq:Gl}.
    \item The discrete spectral component $u_d(t)$, zero energy component $u_z(t)$ and remainder component $u_r(t)$ in the wave expansion are defined in equation \eqref{e:3us}.
\end{itemize}

\section{The Main Expansion} \label{s:main}

\subsection{The abstract theorem}
 
Let $P$ be a self-adjoint operator on a Hilbert space $\mathcal H$ with domain $\mathcal D$. For $\lambda$ in the upper half plane such that $\lambda^2$ is not in the spectrum of $P$, let $R(\lambda)=(P-\lambda^2)^{-1} \colon \mathcal H \to \mathcal H$. Assume:
\begin{enumerate}
\item The spectrum of $P$ consists of $[0,\infty)$  together with up to finitely many negative eigenvalues $E_1 < \cdots <E_{\Nev}<0$ with corresponding orthogonal projections $\Pi_1$, $\dots$, $\Pi_{\Nev}$. %  Moreover, $0$ is an eigenvalue of $P$ with multiplicity $N'-N$ \textcolor{red}{(?)}, and 
%$\{\psi_n: \; N+1\leq n\leq N+N'\}$ \textcolor{red}{(?)} is a corresponding orthonormal set of 
%eigenfunctions.
\item There exist a bounded $\chi \colon \mathcal H \to \mathcal H$ and a $c_0>0$ such that $\chi R(\lambda) \chi$ 
continues holomorphically from $\{\lambda \colon \im \lambda >0, \re \lambda \ne 0\}$ to $\{\lambda \colon \im \lambda > -c_0, \re \lambda \ne 0\}$. 
\item There exist $p$ and $q \ge 0$ such that $\|\chi R(\lambda) \chi\|_{\mathcal D^p \to \mathcal D^q} \to0 $ uniformly as $|\re \lambda| \to \infty$ with $|\im \lambda | < c_0$. Moreover, if $s > - c_0$ then
\[
\int^\infty_{1}\|\chi R(\lambda+is) \chi \|_{\mathcal D^p \to \mathcal D^q} d \lambda <+\infty.
\]
\item There are operators $\chi A_{j,k} \chi\colon \mathcal D^p \to \mathcal D^q$ (with the indices $p,q$ as in (3)) and numbers $\nu_j \in \mathbb R$, $b_{j,k} \in \mathbb C \setminus i[0,\infty)$, such that
\begin{equation}\label{e:ajkexp}
\chi R(\lambda) \chi = \sum_{j,k} \chi A_{j,k} \chi \lambda^{\nu_j} \log^k(b_{j,k}\lambda), 
\end{equation}
where the  sum $ \sum_{j,k}  = \sum_{j = j_0}^\infty \sum_{k \in \mathbb Z}$
%\chi A_{j,k}\chi \lambda^{\nu_j}\log^k(b_{j,k}\lambda)$ 
converges absolutely and uniformly
on compact subsets of 
 $$\big\{ \lambda \colon \arg \lambda \in(-\frac \pi 2,\frac{3\pi}2),\;
 |\re \lambda|<c_0,\; |\im \lambda|<c_0\big\}.$$
 %0<\ |\re\lambda|<c_0, 0<\ |\im \lambda|<c_0\}$,
 %any $\epsilon>0$. 
 
\end{enumerate}

\begin{thm} 
\label{t:main}
With the assumptions and notation above, if $\chi f = f \in \mathcal D^p$, $0<c<c_0$, and $E_\ell<- c^2$ for all $\ell$, then
\begin{equation}\label{e:3us}
 \chi u(t)=  \chi (u_{d}(t) +  u_{z}(t) +  u_{r}(t)),
\end{equation}
where
\begin{enumerate}
\item $u_d$ is the contribution of the discrete part of the spectrum: 
\[u_{d}(t) = \sum_{\ell=1}^{\Nev} \frac{\sinh(\sqrt{-E_\ell}t)}{\sqrt{-E_\ell}} \Pi_\ell f.\]
\item $u_z$ is the zero-energy contribution: 
\begin{equation}\label{e:chiuz}
\chi u_z(t) =  \frac{1}{2\pi}  \sum_{j,k} \chi A_{j,k} \chi f  \lim_{\delta \to 0^+}\int_{\gamma(\delta,c)} e^{-it \lambda}\lambda^{\nu_j} \log^k(b_{j,k} \lambda)\, d\lambda,
\end{equation}
where  $\gamma(\delta,c)$ is as in Figure \ref{f:contours}.
\item $u_r$ is an exponential remainder:
\begin{equation}\label{e:ur}
\|\chi u_{r}(t)\|_{\mathcal D^q} = O(e^{-ct})\|f\|_{\mathcal D^p}.
\end{equation}
\end{enumerate} 
The convergence in \eqref{e:chiuz} and the bound in \eqref{e:ur} are uniform for  $t\in [0,\infty)$.
%$t \in \mathbb R$.
\end{thm}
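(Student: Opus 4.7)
The plan is to use the spectral theorem to reduce $u(t)$ to a contour integral involving $R(\lambda)$, and then deform the contour using hypotheses (2)--(4). The discrete negative eigenvalues $E_\ell$ contribute $\sinh(\sqrt{-E_\ell}\,t)/\sqrt{-E_\ell}\cdot \Pi_\ell f$ via the spectral theorem, giving $u_d(t)$ directly. For the continuous spectrum on $[0,\infty)$, Stone's formula writes the spectral measure as $(2\pi i)^{-1}[R_E(E+i0)-R_E(E-i0)]dE$ with $R_E(z) = (P-z)^{-1}$; changing variables $E = \lambda^2$ and symmetrizing in the sign of $\lambda$ rewrites the continuous-spectrum contribution as
\[
u(t) - u_d(t) = \frac{1}{\pi i}\int_{-\infty}^\infty \sin(t\lambda)\,R(\lambda+i0)\,f\,d\lambda,
\]
where $R(\lambda+i0)$ is the boundary value from above of $R(\lambda) = (P-\lambda^2)^{-1}$. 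After applying $\chi$ and using $\chi f = f$ to insert $\chi$ on the right, and after writing $\sin(t\lambda) = (e^{it\lambda}-e^{-it\lambda})/(2i)$, I would split $\chi(u-u_d) = \chi I_+(t) - \chi I_-(t)$ with $\chi I_\pm(t) = (2\pi)^{-1}\int_{\mathbb R}e^{\mp it\lambda}\chi R(\lambda+i0)\chi f\,d\lambda$.

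I would handle $\chi I_-(t)$ by shifting the contour upward to $\mathbb R + ic$ for $c\in(0,c_0)$ small enough that no bound-state pole $i\sqrt{-E_\ell}$ is crossed. The exponential $e^{it\lambda}$ contributes $e^{-ct}$, and hypothesis (3) provides an $L^1$ bound on $\lambda\mapsto\|\chi R(\lambda+ic)\chi\|_{\mathcal D^p\to\mathcal D^q}$ along the shifted contour, yielding $\|\chi I_-(t)\|_{\mathcal D^q} = O(e^{-ct})\|f\|_{\mathcal D^p}$, absorbed into $u_r(t)$. For $\chi I_+(t)$, hypothesis (2) allows continuing the integrand across $\mathbb R\setminus\{0\}$ into the strip $\{-c_0 < \im \lambda < 0,\,\re\lambda\ne 0\}$; I would deform $\mathbb R$ to the contour $\gamma(\delta,c)\cup\Gamma(\delta,c)$, where $\Gamma(\delta,c)$ is the union of the two tails at $\im\lambda = -c$ and $\gamma(\delta,c)$ is the bounded piece detouring around $\lambda=0$ at distance $\delta$, staying inside the region $\{\arg\lambda\in(-\pi/2,3\pi/2),\,|\re\lambda|,|\im\lambda|<c_0\}$ on which \eqref{e:ajkexp} converges absolutely. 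On $\Gamma(\delta,c)$, the $e^{-ct}$ factor together with hypothesis (3) again contributes an $O(e^{-ct})\|f\|_{\mathcal D^p}$ remainder. On $\gamma(\delta,c)$, I would substitute the expansion \eqref{e:ajkexp} for $\chi R(\lambda)\chi f$, swap sum and integral by absolute convergence, and take $\delta\to 0^+$ termwise to obtain the formula \eqref{e:chiuz} for $\chi u_z(t)$.

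The main obstacle is justifying the commutation of the $\delta\to 0^+$ limit with the series $\sum_{j,k}$ while keeping all estimates uniform in $t\in[0,\infty)$. Each individual integral $\int_{\gamma(\delta,c)}e^{-it\lambda}\lambda^{\nu_j}\log^k(b_{j,k}\lambda)\,d\lambda$ converges as $\delta\to 0^+$ by local integrability of $\lambda^{\nu_j}\log^k$ near $0$ along the contour, and is bounded uniformly in $t\geq 0$ because $|e^{-it\lambda}|$ is bounded on the bounded set $\gamma(0,c)$ independently of $t$. Uniform convergence of the full series then follows from the absolute convergence assumed in (4). For the remainder bound \eqref{e:ur}, I would collect the exponential factor $e^{-ct}$ available on both $\mathbb R + ic$ and $\Gamma(\delta,c)$, combined with the $\mathcal D^p\to\mathcal D^q$ integrability of $\chi R(\cdot+ic)\chi$ from hypothesis (3), to control the tails uniformly in $t$.
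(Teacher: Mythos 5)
Your proposal is correct and follows essentially the same route as the paper: Stone's formula in the variable $E=\lambda^2$, an upward shift of the $e^{+it\lambda}$ piece (equivalently the paper's downward shift of the $R(-\lambda+i0)$ term) controlled by Assumption (3) to produce an $O(e^{-ct})$ remainder, a deformation of the $e^{-it\lambda}$ piece to $\Gamma(\delta,c)$ using Assumption (2), and termwise substitution of \eqref{e:ajkexp} on $\gamma(\delta,c)$ justified by uniform convergence. The only cosmetic difference is that you split $\sin(t\lambda)$ into two exponentials and keep both integrals in terms of $R(\lambda+i0)$, whereas the paper folds everything into $\int e^{-it\lambda}\bigl(R(\lambda+i0)-R(-\lambda+i0)\bigr)\,d\lambda$; these are identical after the substitution $\lambda\mapsto-\lambda$.
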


%\[
%u_{pp}(t) = \sum_{n=1}^N \frac{e^{\sqrt{-E_n} t}}{2\sqrt {-E_n}} \langle \psi_n , f_1\rangle_{\mch} \psi_n + \sum_{n=N+1}^{N+N'}
%t \langle \psi_n , f_1\rangle_{\mch} \psi_n, 
%\]
%and {\em (change}) 
%\begin{equation}
%    \label{eq:mathfrakJ}
%    \mathfrak{J}(t,\nu,k,b) = \frac 1 {\pi} \int_\gamma e^{-it\lambda}  \lambda^\nu\big(\log \lambda - \tfrac {\pi i} 2 -b_k\big)^k  d\lambda ,
%\end{equation}
%with $\gamma$ a contour going from $-\delta - i \infty$ straight up to $-\delta$, then around the semicircle in the upper half plane to $\delta$, then straight down to $\delta - i \infty$ (see Figure \ref{f:contour3}.)

\begin{figure}[ht]
\includegraphics[width=5cm]{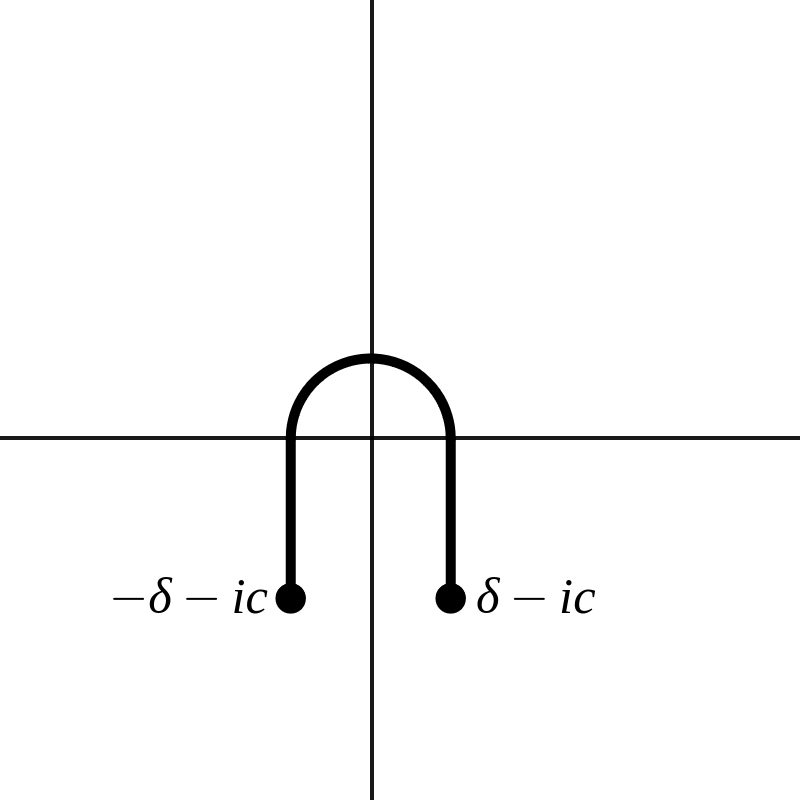}\hspace{5mm}\includegraphics[width=5cm]{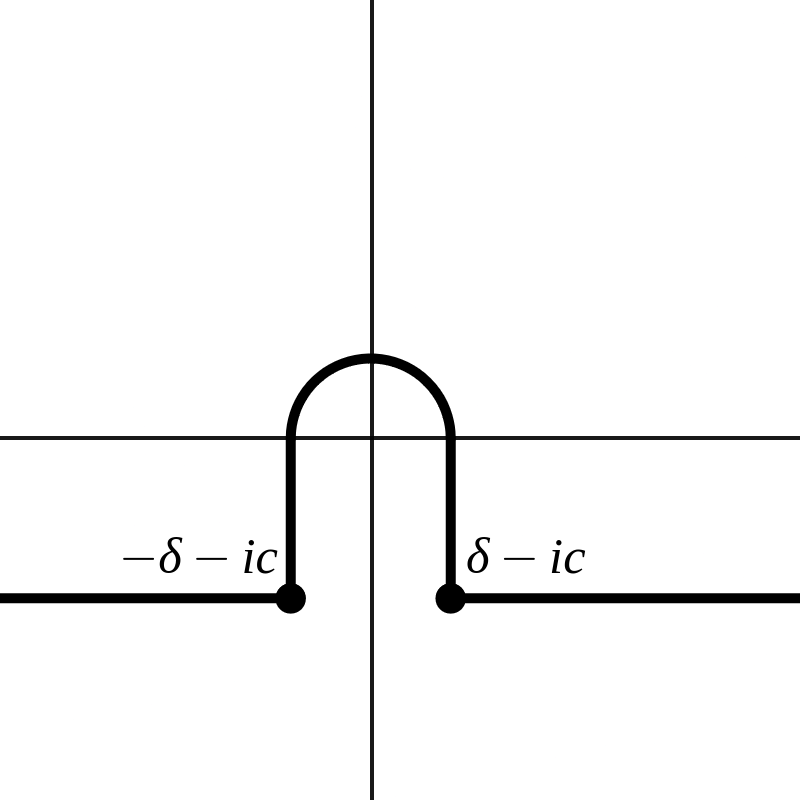}\hspace{5mm}\includegraphics[width=5cm]{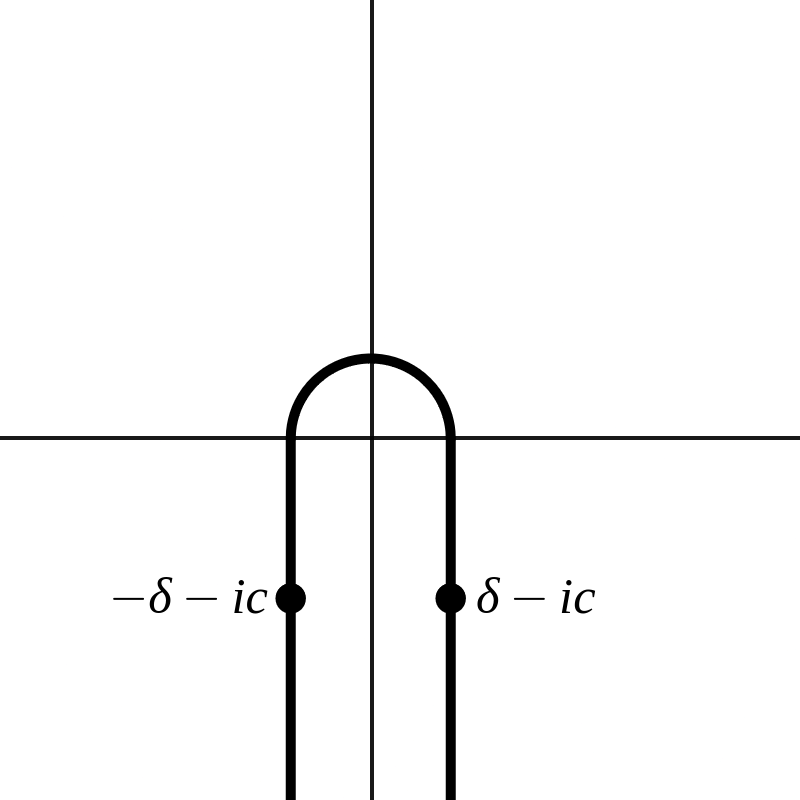}
\caption{The first contour is $\gamma(\delta,c)$, the second is $\Gamma(\delta,c)$, and the third is  $\gamma(\delta,\infty)$. They are all oriented from left to right.}\label{f:contours}
%https://www.desmos.com/calculator/sb77zr6w6m
\end{figure}

\noindent{\bf Remark.} The residue theorem allows us to compute the following  $k=0$ terms in   \eqref{e:chiuz}: %{\em (change factor of 2; check)}
%\[
%\mathfrak{J}(t,\nu,k,b) = \frac {ie^{i \frac \pi 2 \nu}} {\pi} \int_0^\infty e^{-st}  s^\nu\left(e^{i\pi \nu} (\log s +i\pi -b)^{k}- e^{-i\pi \nu}(\log s -i\pi -b)^{k} \right) ds, \qquad \nu>-1,
%\] 
%and {\em (change factor of 2; check)
%\mathfrak{J}(t,\nu,0,b) =  -
%\frac 1 \pi e^{i \frac \pi 2 \nu} \sin (\pi \nu) \Gamma(\nu+1)t^{-\nu-1}, \qquad  \nu >-1,
\begin{equation}\label{eq:k=0}
 \frac{1}{2\pi} \lim_{\delta \to 0^+}\int_{\gamma(\delta,c)} e^{-it \lambda}\lambda^{\nu}\, d\lambda =
 \begin{cases}
  - t , \qquad &\nu = -2, \\
  - i, \qquad &\nu = -1, \\
  0, \qquad & \nu = 0, \, 1, \, 2,\dots 
\end{cases}
\end{equation}
That leads to  the following corollary, which is applicable to many cases of odd-dimensional Euclidean scattering and to the half-integer Aharonov--Bohm case.
\begin{cor}
\label{cor:int}
In the expansion \eqref{e:ajkexp}, if $\nu_j =j$ for all $j$, $j_0 = -2$, and $A_{jk} = 0$ whenever $k \ne 0$, then 
\[
 \chi u_z(t) = -t \chi  A_{-2,0}\chi f  -i \chi A _{-1,0} \chi f.
\]
\end{cor}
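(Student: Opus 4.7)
The plan is to combine the explicit formula \eqref{e:chiuz} for $\chi u_z(t)$ from Theorem \ref{t:main} with the integral computations collected in \eqref{eq:k=0}; the hypotheses of the corollary are tailored exactly so that those computations handle every term in the sum.

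First, the assumption that no $k \ne 0$ terms appear in \eqref{e:ajkexp} collapses the double sum in \eqref{e:chiuz} to a single sum over $j$, leaving
\[
\chi u_z(t) = \sum_j \chi A_{j,0}\chi f \cdot \frac{1}{2\pi}\lim_{\delta \to 0^+}\int_{\gamma(\delta,c)} e^{-it\lambda}\lambda^{\nu_j}\, d\lambda.
\]
Next, since every $\nu_j$ is an integer $\ge -2$, the formula \eqref{eq:k=0} computes each of these contour integrals. Each summand with $\nu_j \ge 0$ vanishes; the (at most one) summand with $\nu_j = -2$ contributes $-t$; and the (at most one) summand with $\nu_j = -1$ contributes $-i$. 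Relabeling the surviving coefficients as $A_{-2,0}$ and $A_{-1,0}$, with the convention that these are interpreted as $0$ when no such index $j$ occurs in the expansion, produces the stated identity.

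The only nontrivial input is \eqref{eq:k=0} itself, which the Remark attributes to the residue theorem. To verify it directly, I would deform $\gamma(\delta,c)$ into the lower half-plane, where $e^{-it\lambda}$ decays for $t>0$, close the contour by a large semicircle, and compute the residue of $e^{-it\lambda}\lambda^{\nu}$ at $\lambda = 0$: namely $-it$ for $\nu = -2$, $\,1$ for $\nu = -1$, and no pole for $\nu \ge 0$. Accounting for the clockwise orientation and the $1/(2\pi)$ prefactor then yields the three cases in \eqref{eq:k=0}. The only point requiring any care is that the contributions from the endpoints at $\lambda = \pm c$ and from the large closing arc vanish (or combine consistently) in the appropriate limits; these are explicit one-variable estimates, so no serious obstacle arises, and the corollary itself reduces to a straightforward substitution.
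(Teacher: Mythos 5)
Your argument is correct and matches the paper's: the corollary is obtained exactly by substituting the residue computations \eqref{eq:k=0} termwise into \eqref{e:chiuz}, with the coefficients reindexed by the exponent $\nu$. One small point on your verification of \eqref{eq:k=0}: no large closing arc is needed, since in the limit $\delta \to 0^+$ the contour $\gamma(\delta,c)$ is already the closed circle $|\lambda|=c$ traversed clockwise (as in the proof of Lemma \ref{l:rem}), so the residue theorem applies directly and gives $-2\pi i$ times the residue at $0$, confirming the three cases.
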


%For example, in the AB noninteger flux case all the $\nu_j$ are nonnegative and all the $k$ are zero. In the integer flux case all $\nu_j$ are nonnegative even integers and we take all the $b_k$ equal to each other, and this is the same for Dirichlet obstacle scattering. For Euclidean scattering by a general compactly supported potential the  $\nu_j$ are even integers $\ge -2$ and we may do better to take the $b_k$ not all the same.

%{\em still do something about continuation to $\Complex$ case; does it follow from these assumptions if all the $\nu_j$s are integers and $A_{j,k}=0$ whenever $k\not =0$?}

\begin{proof}[Proof of Theorem \ref{t:main}]

Let $dE_\lambda$ be the spectral measure of $P$ with spectral parameter $\lambda^2$. By 
functional calculus and Stone's formula, using Assumption (1) on the spectrum of $P$, we have
\[\begin{split}
 u(t) - u_{d}(t) &= \int_0^{\infty}\frac{\sin(t\lambda)}{\lambda}fdE_{\lambda} \\
&= \frac{1}{\pi i} \int_0^{\infty}\sin(t\lambda)\left(R(\lambda+ i 0)-R(-\lambda + i 0)\right)f\, d\lambda \\
&= \frac 1 {2\pi}   \int_{-\infty}^\infty e^{-it\lambda }\left(R(\lambda+ i 0)-R(-\lambda+ i 0)\right) f\, d\lambda.     
\end{split}\]
Using $\chi f = f$ and multiplying by $\chi$ on the left, and using Assumptions (2) and (3), we shift the $R(-\lambda + i 0)$ term downward to get
\begin{equation}\label{e:r-l}
\Big\|\int_{-\infty}^{\infty} e^{-it\lambda }\chi  R(-\lambda+i0)\chi f\, d\lambda\Big\|_{\mathcal D^q}  = \Big\|\int_{-\infty}^{\infty} e^{-it(\lambda -ic)}\chi  R(-\lambda+ic)\chi f\, d\lambda\Big\|_{\mathcal D^q} = O(e^{-ct})\|f\|_{\mathcal D^p} .
\end{equation}
That gives
\[
 \chi ( u(t) - u_{d}(t)) =  \frac 1 {2\pi}   \int_{-\infty}^\infty e^{-it\lambda }\chi R(\lambda+i0) \chi f\, d\lambda + O_{\mathcal D^q}(e^{-ct}) .
\]
We shift the remaining integral from $(-\infty, \infty)$ to the contour $\Gamma(\delta,c)$ in Figure \ref{f:contours}, estimate the integrals over $\Gamma(\delta,c) \setminus \gamma(\delta,c)$  as in \eqref{e:r-l}, and take the limit $\delta \to 0^+$ to obtain 
\begin{equation}
\label{e:resint}
    \chi  ( u(t) - u_{d}(t))= \frac 1 {2\pi}  \lim_{\delta \to 0^+}  \int_{\gamma(\delta,c)} e^{-it\lambda }\chi R(\lambda) \chi f\, d\lambda + O_{\mathcal D^q}(e^{-ct}) .
\end{equation}
To conclude we plug in \eqref{e:ajkexp} and use uniform convergence to justify switching the order of the sum and integral and limit.
\end{proof}
 
\subsection{Asymptotics in terms of elementary functions} To describe the behavior of the terms in the main expansion \eqref{e:chiuz} as $t\to \infty$, we follow Lemma 9 of \cite{va2}, which is Lemma 7 of Chapter~X of \cite{va}.  See 
also \cite{erd} and \cite[Lemmas 6.6, 6.7]{Mur} for related
computations.

\begin{lem}\label{l:asylog} Let $\nu \in \mathbb R$, $b \in \mathbb C \setminus i[0,\infty)$, $k \in \mathbb Z$, $0<c'<c$, $t_0>0$. Then 
\begin{equation}\label{e:asy1}
\lim_{\delta \to 0^+}\int_{\gamma(\delta,c)} e^{-it \lambda}\lambda^{\nu} \log^k(b \lambda)\, d\lambda = \lim_{\delta \to 0^+} \int_{\gamma(\delta,\infty)} e^{-it \lambda}\lambda^{\nu} \log^k(b \lambda)\, d\lambda + O(e^{-c't}),
\end{equation}
uniformly for $t \ge t_0$, where $\gamma(\delta,c)$ and $\gamma(\delta,\infty)$ are as in Figure \ref{f:contours}. If $\delta>0$ and $t > 0$, then
\begin{equation}\label{e:asy2}
  \int_{\gamma(\delta,\infty)} e^{-it \lambda}\lambda^{\nu}  \, d\lambda = -2 e^{i \frac \pi 2 \nu} \sin(\pi \nu) \Gamma(\nu+1) t^{-\nu-1} =  e^{i \frac \pi 2 \nu} \frac {2\pi} {\Gamma(-\nu)} t^{-\nu-1}.
\end{equation}
Finally, for any positive integer $M$, we have
\begin{equation}\label{e:asy3}
 \int_{\gamma(\delta,\infty)} e^{-it \lambda}\lambda^{\nu} \log^k(b \lambda)\, d\lambda  = t^{-\nu-1}\sum_{m=0}^M  c_{\nu,k,m}\log^{k-m}t + O(t^{-\nu-1} \log^{k-M-1}t),
\end{equation}
where
\begin{equation}\label{eq:cnukm}
c_{\nu,k,m} =  (-1)^{k-m} \binom k m    \int_{\gamma(\delta,\infty)} e^{-i\mu}\mu^{\nu} \log^m(b \mu)\, d\mu.
\end{equation}
\end{lem}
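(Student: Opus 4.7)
The plan is to establish the three claims in sequence, the main technical work being the remainder estimate in \eqref{e:asy3}.

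For \eqref{e:asy1}, the set $\gamma(\delta,\infty)\setminus\gamma(\delta,c)$ consists of two tails descending along opposite sides of the branch cut $-i[0,\infty)$ from height $-ic$ down to $-i\infty$. Parametrizing each tail by $\lambda=\pm 0 -is$ with $s\ge c$ gives $|e^{-it\lambda}|=e^{-ts}$ and $|\lambda^\nu\log^k(b\lambda)|\le Cs^\nu(\log s+C')^{|k|}$. Substituting $s=c+u$ reduces the tail integral to
\[
e^{-ct}\int_0^\infty e^{-tu}(c+u)^\nu(\log(c+u)+C')^{|k|}\,du,
\]
whose $u$-integral is bounded uniformly for $t\ge t_0$, yielding $O(e^{-ct})\subset O(e^{-c't})$.

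For \eqref{e:asy2}, using holomorphy of the integrand in $\Complex\setminus(-i[0,\infty))$ and decay of $e^{-it\lambda}$ as $\im\lambda\to -\infty$, I would deform $\gamma(\delta,\infty)$ onto the keyhole contour made of an arc of radius $\delta$ around the origin joined to the two rays along the branch cut. With $\lambda=-is$ on both rays, and $\arg\lambda=3\pi/2$ on the left (so $\lambda^\nu=e^{i3\pi\nu/2}s^\nu$) versus $\arg\lambda=-\pi/2$ on the right (so $\lambda^\nu=e^{-i\pi\nu/2}s^\nu$), the identity $\int_0^\infty e^{-ts}s^\nu\,ds=\Gamma(\nu+1)t^{-\nu-1}$ sums the two ray contributions to
\[
i(e^{i3\pi\nu/2}-e^{-i\pi\nu/2})\Gamma(\nu+1)t^{-\nu-1}=-2e^{i\pi\nu/2}\sin(\pi\nu)\Gamma(\nu+1)t^{-\nu-1}.
\]
This is initially derived for $\nu>-1$, where the arc vanishes as $\delta\to 0^+$; both sides extend to entire functions of complex $\nu$, so the identity persists for all $\nu\in\Real$. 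The alternative form follows from $\Gamma(\nu+1)\Gamma(-\nu)=-\pi/\sin(\pi\nu)$.

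For \eqref{e:asy3}, substituting $\mu=t\lambda$ and deforming the rescaled contour $\gamma(t\delta,\infty)$ back to $\gamma(\delta,\infty)$ (valid by Cauchy's theorem together with exponential decay of $e^{-i\mu}$) gives
\[
\int_{\gamma(\delta,\infty)}e^{-it\lambda}\lambda^\nu\log^k(b\lambda)\,d\lambda=t^{-\nu-1}\int_{\gamma(\delta,\infty)}e^{-i\mu}\mu^\nu(\log(b\mu)-\log t)^k\,d\mu.
\]
Expanding $(\log(b\mu)-\log t)^k=(-\log t)^k(1-\log(b\mu)/\log t)^k$ by the (generalized) binomial theorem to order $M$ and integrating term by term produces the main sum matching \eqref{eq:cnukm}. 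The hard part is the Taylor remainder: the bound $|(1-x)^k-\sum_{m=0}^M\binom{k}{m}(-x)^m|\le C_{k,M}|x|^{M+1}$ requires $|x|\le 1/2$, which with $x=\log(b\mu)/\log t$ breaks down once $|\log(b\mu)|>(\log t)/2$. I would resolve this by splitting $\gamma(\delta,\infty)$ at $|\mu|=\sqrt t$: on $|\mu|\le\sqrt t$ the Taylor bound is valid and contributes $O((\log t)^{k-M-1})$ after integration against $|e^{-i\mu}\mu^\nu||\log(b\mu)|^{M+1}$; on $|\mu|\ge\sqrt t$ the factor $|e^{-i\mu}|\le e^{-c\sqrt t}$ overwhelms any power-logarithmic bound on the integrand. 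Combining with the $t^{-\nu-1}$ prefactor yields \eqref{e:asy3}.
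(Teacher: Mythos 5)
Your proposal is correct and follows the paper's strategy in all three parts: exponential decay of the integrand for \eqref{e:asy1}; the Hankel loop computation for $\nu>-1$ followed by analytic continuation in $\nu$ for \eqref{e:asy2} (your phase bookkeeping agrees with the paper's, since $i(e^{3i\pi\nu/2}-e^{-i\pi\nu/2})=-2e^{i\pi\nu/2}\sin(\pi\nu)$); and the rescaling $\mu=t\lambda$ followed by an expansion in powers of $1/\log t$ for \eqref{e:asy3}, producing the same coefficients \eqref{eq:cnukm}. The one place you genuinely diverge is the error control in \eqref{e:asy3}: the paper sets $\varepsilon=1/\log t$, observes that $g(\varepsilon)=\int_{\gamma(\delta,\infty)}e^{-i\mu}\mu^\nu(\varepsilon\log(b\mu)-1)^k\,d\mu$ is a $C^\infty$ function of $\varepsilon$ near $0$, and invokes Taylor's theorem, whereas you expand the binomial explicitly and split the contour at $|\mu|=\sqrt t$. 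Your route is more elementary and makes explicit the uniform bounds that the paper's appeal to differentiation under the integral sign leaves implicit, in particular for $k<0$ where one must keep $\varepsilon\log(b\mu)-1$ away from zero. Two small points to tighten: at the splitting radius you only get $|x|=|\log(b\mu)|/\log t\le \tfrac12+O(1/\log t)$ rather than $|x|\le\tfrac12$, so state the Taylor remainder bound for $|x|\le r$ with any fixed $r<1$ (it holds there because, for integer $k$, $(1-x)^k$ is rational with its only pole at $x=1$); and on the outer region $|\mu|\ge\sqrt t$ with $k<0$ the factor $|1-\log(b\mu)/\log t|^{k}$ is not controlled by a bound on $\log|\mu|$ alone --- you need $|\log t-\log(b\mu)|\ge|\im\log(b\mu)|\ge c>0$ on the rays of the contour, which is precisely where the hypothesis $b\notin i[0,\infty)$ enters.
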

 
To compute $c_{\nu,k,0}$,  set $t=1$ in \eqref{e:asy2}.    To compute $c_{\nu,k,m}$, do the same after differentiating up to $m$ times with respect to $\nu$. %and then use \eqref{eq:cnukm}. 
In particular, $c_{\nu,k,0} =0$ when $\nu\in\Natural_0$, and $c_{0,-1,1}=2\pi$.

\begin{proof}
Equation \eqref{e:asy1} follows from the exponential decay of the integrand; the integral on the right is independent of $\delta>0$ sufficiently small because the integrand is analytic.

Equation  \eqref{e:asy2} is a version of Hankel's loop integral for the Gamma function. To prove it, first observe that $ \int_{\gamma(\delta,\infty)} e^{-it \lambda}\lambda^{\nu}  \, d\lambda $ is independent of $\delta>0$ by exponential decay and analyticity of the integrand. Next, for $\nu>-1$, taking $\delta \to 0^+$ and parametrizing with $s = it \lambda$ yields
\[
\lim_{\delta \to 0^+}   \int_{\gamma(\delta,\infty)} e^{-it \lambda}\lambda^{\nu}  \, d\lambda = t^{-\nu-1} \Big( e^{3\pi i(\nu+1) /2} \int_{\infty}^0 e^{-s} s^\nu\,ds  + e^{-\pi i(\nu+1) /2} \int_0^\infty e^{-s} s^\nu\,ds  \Big).
\]
Simplifying gives the first equality of \eqref{e:asy2}, and then second comes from the Gamma reflection formula. Then \eqref{e:asy2} follows for other values of $\nu$ by analytic continuation.

To prove \eqref{e:asy3}, we first observe that $ \int_{\gamma(\delta,\infty)} e^{-it \lambda}\lambda^{\nu} \log^k(b \lambda)\, d\lambda $ is independent of $\delta>0$ small enough (we need only stay away from $\lambda = 1/b$ when $k <0$). Then rescale, using $\mu = t \lambda$, to obtain
\[
\lim_{\delta \to 0^+}    \int_{\gamma(\delta,\infty)} e^{-it \lambda}\lambda^{\nu} \log^k(b \lambda)\, d\lambda  =  t^{-\nu-1} \log^kt \lim_{\delta \to 0^+}   \int_{\gamma(\delta,\infty)} e^{-i\mu}\mu^{\nu} (\varepsilon\log b\mu-1)^k\, d\mu,
\]
where $\varepsilon = 1/ \log t$. Next, $g(\varepsilon) = \int_{\gamma(\delta,\infty)} e^{-i\mu}\mu^{\nu} (\varepsilon\log b\mu-1)^k\, d\mu$ is a $C^\infty$ function of $\varepsilon$ for $\varepsilon$ real and small, and is independent of $\delta>0$ small enough. Taylor expanding at $\varepsilon=0$ yields
\[
 \int_{\gamma(\delta,\infty)} e^{-it \lambda}\lambda^{\nu} \log^k(b \lambda)\, d\lambda  =  t^{-\nu-1} \log^kt \Big(\sum_{m=0}^M \frac {g^{(m)}(0)}{m!} \log^{-m}t
 + O(\log^{-M-1}t)\Big),\]
 which implies \eqref{e:asy3} once we compute $g^{(m)}(0)$ by differentiating under the integral sign.
%\[  \log^k(b\mu/t) = (\log(b\mu) - \log t)^k = \sum_{m=0}^\infty (-1)^{k-m} \binom k m  \log^m(b \mu) \log^{k-m}t.\] 
\end{proof}

\subsection{Tail estimate}

We use nonstationary phase to control the tail of a series like the one in \eqref{e:chiuz}:

\begin{lem}\label{l:rem}
Let $c_0>0$, let $U=\{  \lambda \in \Complex:\; i\lambda \not \in[0,\infty) \text{ and }|\lambda|<c_0\}$, and let $(\mathcal X,\|\cdot\|)$ be a Banach space. Let $G \colon U  \to \mathcal X$ be analytic, and suppose $G^{(m)}$ is bounded for $m=0,1,...,N$. Then, for any $c \in (0,c_0)$, 
\[\Big\|  \lim_{\delta \to 0^+}\int_{\gamma(\delta,c)} e^{-it \lambda} G(\lambda)\, d\lambda \Big\| = O(t^{-N}).\]
\end{lem}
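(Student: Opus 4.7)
The plan is integration by parts in $\lambda$, iterated $N$ times. For fixed $\delta>0$ the contour $\gamma(\delta,c)$ is a compact piecewise smooth curve contained in $U$, on which $G$ (and all its derivatives) is analytic. Applying $e^{-it\lambda}\,d\lambda = -\tfrac{1}{it}\,d(e^{-it\lambda})$ iteratively yields
\begin{equation*}
\int_{\gamma(\delta,c)} e^{-it\lambda}G(\lambda)\,d\lambda
= -\sum_{m=0}^{N-1} \frac{1}{(it)^{m+1}}\bigl[e^{-it\lambda}G^{(m)}(\lambda)\bigr]_{\partial\gamma(\delta,c)}
+ \frac{1}{(it)^N}\int_{\gamma(\delta,c)} e^{-it\lambda}G^{(N)}(\lambda)\,d\lambda,
\end{equation*}
with contributions at any interior corners of the piecewise smooth contour canceling by continuity of each $G^{(m)}$.

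I would then estimate the two pieces. The final integral has norm at most $\sup_U \|G^{(N)}\|$ times the length of $\gamma(\delta,c)$, and the length is uniformly bounded as $\delta \to 0^+$ because the arc of radius $\delta$ around the origin contributes only $O(\delta)$; this term is therefore $O(t^{-N})$. For the boundary terms, the key geometric input is that, as shown in Figure \ref{f:contours}, the two endpoints of $\gamma(\delta,c)$ lie strictly in the open lower half-plane and are independent of $\delta$. Hence $|e^{-it\lambda}| \le e^{-c't}$ at each endpoint for some $c' \in (0,c)$, so each boundary contribution is $O(e^{-c't})$, which is $o(t^{-N})$.

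Letting $\delta \to 0^+$ is then harmless: the boundary terms do not depend on $\delta$, and the remaining integral converges by dominated convergence since $G^{(N)}$ is bounded and the arc's contribution vanishes.

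The one sensitive ingredient is the location of the endpoints of $\gamma(\delta,c)$: if they sat on the real axis, iterated integration by parts would yield only $O(t^{-1})$ from the leading boundary term, and no amount of further IBP would improve this. It is precisely the dip of the contour below the real axis at its endpoints — visible in Figure \ref{f:contours} — that supplies the exponential decay needed to absorb the boundary terms into the $O(t^{-N})$ bound.
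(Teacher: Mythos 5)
Your argument is correct and is essentially the paper's proof: the paper merely deforms $\gamma(\delta,c)$ to the full circle $C(c)$ of radius $c$ before integrating by parts $N$ times, so that all boundary terms sit at the single point $-ic$ where $|e^{-it\lambda}|=e^{-ct}$, and then deforms the remaining $G^{(N)}$ integral back to $\gamma(\delta,c)$ to bound it by $O(1)$. The one small point to tighten in your version is that on the arc of radius $\delta$ one only has $|e^{-it\lambda}|\le e^{t\delta}$ rather than $\le 1$, but since this factor tends to $1$ as $\delta\to 0^+$ for each fixed $t$, your bound on the final integral survives the limit.
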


\begin{proof} 
Let $C(c)$ be the circle parametrized by $s \mapsto c e^{-is}$, $s \in (-3\pi/2,\pi/2)$. Then
\[
\lim_{\delta \to 0^+}\int_{\gamma(\delta,c)} e^{-it \lambda}   G(\lambda)\, d\lambda = \int_{C(c)} e^{-it \lambda}   G(\lambda)\, d\lambda.
\]
Integrating by parts $N$ times gives
\[ \Big\|\int_{C(c)} e^{-it \lambda}   G(\lambda)\, d\lambda - (it)^{-N} \int_{C(c)}e^{-it\lambda} G^{(N)} (\lambda)\, d\lambda \Big\| =  O(e^{-ct}),\]
where the remainder comes from the boundary terms.
Finally, 
\[
\Big\|\int_{C(c)}e^{-it\lambda} G^{(N)} (\lambda)\, d\lambda \Big\| = \Big\|\lim_{\delta \to 0^+}\int_{\gamma(\delta,c)} e^{-it\lambda}   G^{(N)} (\lambda)\, d\lambda \Big\| = O(1) .
\qedhere\]
\end{proof}

\section{Applications of the main expansion}\label{s:applications}

\subsection{Scattering by convex obstacles}\label{s:convob2}

Let $P$ and $\mathcal D$ be as in Section \ref{s:convob}. Then Assumption (1) holds with $\Nev=0$ because $P \ge 0$, and because the essential spectrum of $P$ equals that of the free Laplacian by \cite[Theorem 9.43]{borthwick}. Assumption (2) holds  by Theorem A of \cite{vac0}.  Here, and in our other examples,
for appropriate smooth, compactly supported $\chi$ we understand
$\chi:\mch \rightarrow \mch$ to be multiplication by $\chi$.

To verify Assumption (3), note that by (3-1) of \cite{vac}, there exist $\beta>0$ and $c_0 >0$ such that
\[
\|\chi R(\lambda) \chi\|_{L^2 \to L^2} = O(\lambda^\beta),
\]
for all $\chi \in C_c^\infty(\Real^2)$ and all $\lambda$ with $|\im \lambda|<c_0$ and $|\re \lambda|$ large enough. Take such a $\chi$ which is $1$ near $\overline{\mathcal O}$, and let $m \in \mathbb N$, $q \in \mathbb N_0$, $\im \lambda>0$. By geometric summation we have the following identity of operators defined on $\mathcal{D}^{q+m}$:
\[
\chi R(\lambda) \chi = -\frac{1}{\lambda^2}\chi\sum_{j=0}^{m-1}\frac{1}{\lambda^{2j}}  P^j \chi + \frac{1}{\lambda^{2m}} \chi (P+i)^{-q} R(\lambda) (P+i)^qP^m \chi .
\]
Now apply $[\chi,(P+i)^{-q}] = (P+i)^{-q} [(P+i)^q,\chi](P+i)^{-q}$ and extend the identity to the lower half plane. Take $\widetilde \chi \in C_c^\infty(\mathbb R^2)$ identically $1$ near the support of $\chi$ and use the boundedness of $[P^q,\chi]$ from $\mathcal{D}^{\alpha}$  to $\mathcal{D}^{\alpha-q+1/2}$  to deduce 
 $$\|\chi R(\lambda)\chi \|_{\mathcal{D}^{m+q}\rightarrow \mathcal{D}^q} \leq C(|\lambda|^{-2}+
|\lambda|^{-2m}\|\tilde{\chi} R(\lambda)\tilde{\chi}\|_{L^2 \rightarrow L^2})\leq C
(|\lambda|^{-2}+|\lambda|^{-2m+\beta}).$$  
By choosing $m\in \Natural$ with
$m>\beta/2+1/2$ and $p=m+q$ we see that assumption (3) is satisfied.

To verify Assumption (4), note that by Theorem 1 of \cite{cd} we have a more precise version of the expansion \eqref{e:ajkexp} with $\mathcal D^p \to \mathcal D^q$ replaced by $\mathcal H \to \mathcal D$. To deduce from this the desired expansion  $\mathcal D^p \to \mathcal D^q$, write
\begin{equation}\label{e:lowid}
R(\lambda) = \sum_{j=0}^{m-1}(I-P+\lambda^2)^j  + R(\lambda)(I-P+\lambda^2)^{m},
\end{equation}
and proceed analogously to the previous paragraph.

Consequently, by Theorem \ref{t:main}, 
\begin{equation}\label{e:chiuconvob}
\Big\| \chi u(t) -  \frac{1}{2\pi}   \sum_{j=0}^\infty   \sum_{k=-j-1}^{j} \chi A_{j,k} \chi f  \lim_{\delta \to 0^+}\int_{\gamma(\delta,c)} e^{-it \lambda}\lambda^{2j} \log^k(b \lambda)\, d\lambda \Big\|_{\mathcal{D}^q} =  O(e^{-c't}) \|f\|_{\mathcal D^p}.
\end{equation} 
Here we have used \cite[Theorem 1]{cd}, which in particular 
bounds the values of $k$ for which $A_{j,k}$ is nonzero and shows that $\arg b =-\pi/2$.  
To simplify the expansion, we bound the tail of the series using the following Lemma.

\begin{lem}\label{l:tail}
Let $J$ be a positive integer and $N=2J-1$. Then
\[\Big\|\sum_{j=J}^\infty   \sum_{k=-j-1}^{j} \chi A_{j,k} \chi f  \lim_{\delta \to 0^+}\int_{\gamma(\delta,c)} e^{-it \lambda}\lambda^{2j} \log^k(b \lambda)\, d\lambda \Big\|_{\mathcal D^q} = O(t^{-N}) \|f\|_{\mathcal D^p}.\]
\end{lem}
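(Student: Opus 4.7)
The plan is to apply Lemma \ref{l:rem} with $N = 2J - 1$ to the $\mathcal D^q$-valued function
\[
G(\lambda) = \sum_{j = J}^\infty \sum_{k = -j-1}^{j} \chi A_{j,k} \chi f \cdot \lambda^{2j} \log^k(b \lambda).
\]
Uniform convergence of the series on the compact contour $\gamma(\delta,c)$ will allow the sum and the integral in the statement to be swapped, reducing the claim to Lemma \ref{l:rem} applied to this $G$. The linear dependence on $\|f\|_{\mathcal D^p}$ arises naturally through the operator norms $\|\chi A_{j,k} \chi\|_{\mathcal D^p \to \mathcal D^q}$.

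To set up the hypotheses of Lemma \ref{l:rem}, I would first observe that Assumption (4) gives absolute and uniform convergence of the full series for $\chi R(\lambda) \chi f$ on compact subsets of $U$; removing the finitely many terms with $j < J$ preserves this, so $G$ is analytic on $U$, and each $G^{(m)}$ is given by term-by-term differentiation (justified, as usual for locally uniformly convergent series of analytic functions, by Cauchy's estimates applied on slightly smaller compact subsets).

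The main obstacle, and the heart of the argument, is the uniform boundedness of $G^{(m)}(\lambda)$ on all of $U$ for $0 \le m \le N$, including near $\lambda = 0$ where the individual logarithmic factors are singular. A direct computation shows that $\frac{d^m}{d\lambda^m}[\lambda^{2j} \log^k(b \lambda)]$ is a finite linear combination of terms $\lambda^{2j - m} \log^{k - i}(b \lambda)$ with $0 \le i \le m$. The value $N = 2J - 1$ is precisely the largest one guaranteeing $2j - m \ge 1$ for every $j \ge J$ and every $m \le N$, so the factor $\lambda^{2j - m}$ dominates any logarithmic growth near $\lambda = 0$ and each such term is bounded on $U$. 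Paired with the operator-norm bounds underlying the absolute uniform convergence in Assumption (4), this yields
\[
\sup_{\lambda \in U} \|G^{(m)}(\lambda)\|_{\mathcal D^q} \le C_m \|f\|_{\mathcal D^p} \qquad \text{for } 0 \le m \le N.
\]

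Lemma \ref{l:rem} then delivers $\bigl\| \lim_{\delta \to 0^+} \int_{\gamma(\delta,c)} e^{-it\lambda} G(\lambda) \, d\lambda \bigr\|_{\mathcal D^q} = O(t^{-N}) \|f\|_{\mathcal D^p}$, which is the assertion of Lemma \ref{l:tail}.
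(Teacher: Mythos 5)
Your proposal is correct and follows essentially the same route as the paper: interchange the sum and the integral by uniform convergence, verify the hypotheses of Lemma \ref{l:rem} for the tail series $G$ via termwise differentiation (the paper cites Appendix B of \cite{cd} for this step, where you instead spell out why $N=2J-1$ makes every differentiated term carry a factor $\lambda^{2j-m}$ with $2j-m\ge 1$, hence bounded near $\lambda=0$), and then apply Lemma \ref{l:rem}. The only cosmetic difference is that you work with the $\mathcal D^q$-valued function $G(\lambda)$ applied to $f$, while the paper bounds the operator-valued series in $\mathcal D^p\to\mathcal D^q$ norm; both give the stated $O(t^{-N})\|f\|_{\mathcal D^p}$ conclusion.
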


\begin{proof}
Use uniform convergence of the sum to switch the order of limit, integral, and sum, to write
\[
\sum_{j=J}^\infty   \sum_{k=-j-1}^{j} \chi A_{j,k} \chi f  \lim_{\delta \to 0^+}\int_{\gamma(\delta,c)} e^{-it \lambda}\lambda^{2j} \log^k(b \lambda)\, d\lambda
=
\lim_{\delta \to 0^+}\int_{\gamma(\delta,c)} e^{-it \lambda}   G(\lambda)\, d\lambda f,
\]
where $ G(\lambda) = \sum_{j=J}^\infty \sum_{k=-j-1}^{j} \chi A_{j,k} \chi  \lambda^{2j} \log^k(b \lambda)$. By termwise differentiation (as in Appendix~B of \cite{cd}), 
\[
\sup\{\|G^{(m)}(\lambda)\|_{\mathcal D^p \to \mathcal D^q} \colon  i\lambda \not \in[0,\infty) \text{ and }|\lambda|<c_0, \ m  = 0, \dots, N\} = O(1).
\]
Thus the hypotheses of Lemma \ref{l:rem} hold, implying the desired conclusion.
\end{proof}

Using  Lemma \ref{l:asylog} and Lemma \ref{l:tail} on \eqref{e:chiuconvob} gives
\[
\Big\|\chi u(t) - \frac 1 {2\pi} \chi A_{0,-1} \chi f \lim_{\delta \to 0^+}\int_{\gamma(\delta,c)} e^{-it \lambda} \log^{-1}(b \lambda)\, d\lambda\Big\|_{\mathcal{D}^q} = O(t^{-3} \log t)\|f\|_{\mathcal D^p},
\] 
which implies, using $c_{0,-1,1} = 2\pi$, that
\[
\Big\|\chi u(t)   - \frac{\chi A_{0,-1} \chi f}{t \log^2 t}   \Big\|_{\mathcal{D}^q} = O(t^{-1} \log^{-3} t)\|f\|_{\mathcal D^p}.
\]
Plugging in the formula for $A_{0,-1}$ from (1-2) of \cite{cd} now gives Theorem \ref{t:obex}.

\subsection{Aharonov--Bohm operators}\label{s:ab2}
We prove Theorem \ref{t:wintro} in this subsection. Let $P$ and $\mathcal D$ be as in Section \ref{s:ab1}. Then Assumption (1) holds with $\Nev=0$ because $P \ge 0$.  Assumption (2) holds by combining \cite[Theorem 1.1]{my} and \cite[Theorems 2, 3]{cdy}. Assumption (3) is satisfied for %sufficiently large $s$ 
appropriately chosen $c_0$ by \cite[Theorem 1.1]{my} and discussions at the beginning of Section \ref{s:convob2}. Assumption~(4) holds by combining equation \eqref{e:lowid} and \cite[Theorem 2,3]{cdy}, which also gives more precise information about the expansion~\eqref{e:ajkexp}. 

For completeness we briefly recall the form of the low energy expansion in \cite[Theorems 2, 3]{cdy}  of the resolvent of the Aharonov-Bohm Hamiltonian. 
\begin{equation}\label{e:abresexp}
\chi R(\lambda) \chi = 
\begin{cases}
\displaystyle\sum_{j=0}^\infty  \sum_{k=0}^{\infty}  \chi A_{j,k}\chi  \lambda^{2(j+ k \mu_m)}+
\sum_{j=0}^\infty \sum_{k=1}^\infty \chi A_{j,k}' \chi  \lambda^{2(j+ k \mu_M)}, \qquad &\beta \not\in \mathbb Z\\
\displaystyle \sum_{j=0}^\infty   \sum_{k=-j-1}^{j} \chi A_{2j,k} \chi  \lambda^{2j} (\log \lambda-a)^k
, \phantom{\sum^{A^A}}\qquad &\beta \in \mathbb Z
\end{cases}
\end{equation}

\begin{comment}
For $\beta\not \in \Integers$,
\begin{equation}\label{e:resexpnonint}\begin{split}
 \chi R(\lambda) \chi&=  \sum_{j=0}^\infty  \sum_{k=0}^{\infty}  \chi A_{j,k}\chi  \lambda^{2(j+ k \mu_m)}+
\sum_{j=0}^\infty \sum_{k=1}^\infty \chi A_{j,k}' \chi  \lambda^{2(j+ k \mu_M)},
\end{split}\end{equation}
whereas for $\beta \in \Integers$,
\begin{equation}\label{e:resexpint}\begin{split}
\chi R(\lambda) \chi &=  \sum_{j=0}^\infty   \sum_{k=-j-1}^{j} \chi A_{2j,k} \chi  \lambda^{2j} (\log \lambda-a)^k. 
%\\
%&= \chi A_{0,0} \chi + \chi  A_{0,-1} \chi (\log \lambda -a)^{-1} + \chi  A_{2,1} \chi  \lambda^2 (\log \lambda -a)  + \cdots.
\end{split}
\end{equation}
\end{comment}

With this we turn to proving Theorem \ref{t:wintro}.

\noindent
\emph{Case (1).} If  $\beta$ is half an odd integer, all terms in the expansion \eqref{e:abresexp} are non-negative integer powers of $\lambda$. Corollary \ref{cor:int} thus yields $\chi u_z(t) = 0$ and we obtain (1) of Theorem \ref{t:wintro} by Theorem \ref{t:main}.
%{\color{blue} I didn't pay attention to which norm we should use here, but for this we should really have $L^2$-norm and we probably don't need Assumption (3)...}

\noindent
\emph{Case (2).} 
For $2\beta\not \in \Integers$, by Theorem \ref{t:main}, we have
\begin{equation}
\begin{split}
    \chi u_z(t)  = &  \frac{1}{2\pi}   \sum_{j=0}^\infty   \sum_{k=0}^{\infty} \chi A_{j,k} \chi f  \lim_{\delta \to 0^+}\int_{\gamma(\delta,c)} e^{-it \lambda}\lambda^{2(j+ k \mu_m)} \, d\lambda \\
    & + \frac{1}{2\pi}  \sum_{j=0}^\infty   \sum_{k=1}^{\infty} \chi A'_{j,k} \chi f  \lim_{\delta \to 0^+}\int_{\gamma(\delta,c)} e^{-it \lambda}\lambda^{2(j+ k \mu_M)} \, d\lambda.
\end{split}
\end{equation}
We now bound the tail using the following lemma, which is proved in the same way as Lemma \ref{l:tail}. % similar to Lemma \ref{l:tail}. 
\begin{lem}\label{l:tail2}
Let $N<\min\{2(j_1+k_1\mu_m), 2(j_2+k_2\mu_M)\}$. Then
\begin{equation}
\label{eq:tail2}
    \begin{split}
    \Big\|\sum_{j=j_1}^\infty   \sum_{k=k_1}^{\infty} & \chi A_{j,k} \chi f  \lim_{\delta \to 0^+}\int_{\gamma(\delta,c)} e^{-it \lambda}\lambda^{2(j+ k \mu_m)} \, d\lambda\\
 + & \sum_{j=j_2}^\infty   \sum_{k=k_2}^{\infty} \chi A'_{j,k} \chi f  \lim_{\delta \to 0^+}\int_{\gamma(\delta,c)} e^{-it \lambda}\lambda^{2(j+ k \mu_M)} \, d\lambda \Big\|_{\mathcal D^q}  = O(t^{-N}) \|f\|_{\mathcal D^p} .
    \end{split}
\end{equation}
\end{lem}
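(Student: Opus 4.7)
The plan is to mimic the proof of Lemma \ref{l:tail} almost verbatim, with the two technical differences being (i) there are now two series to control rather than one, and (ii) the powers $\lambda^{2(j+k\mu_m)}$ and $\lambda^{2(j+k\mu_M)}$ are generally non-integer, but there are no logarithmic factors to worry about. First, I would invoke the uniform convergence of the expansion \eqref{e:abresexp} on compact subsets of $U=\{\lambda\in\Complex : i\lambda\notin[0,\infty),\ |\lambda|<c_0\}$ to justify interchanging the limit $\delta\to 0^+$, the contour integral, and both infinite sums. This lets me write the left-hand side of \eqref{eq:tail2} as
\[
\Bigl\|\lim_{\delta\to 0^+}\int_{\gamma(\delta,c)} e^{-it\lambda}\, G(\lambda)\,d\lambda\, f\Bigr\|_{\mathcal D^q},
\]
where
\[
G(\lambda) = \sum_{j\ge j_1}\sum_{k\ge k_1}\chi A_{j,k}\chi\,\lambda^{2(j+k\mu_m)}
           + \sum_{j\ge j_2}\sum_{k\ge k_2}\chi A'_{j,k}\chi\,\lambda^{2(j+k\mu_M)}.
\]

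Next I would verify the hypotheses of Lemma \ref{l:rem} for this $G$ viewed as a map $U\to \mathcal{B}(\mathcal D^p,\mathcal D^q)$. Analyticity on $U$ is immediate from the absolute uniform convergence on compact subsets of $U$. For the boundedness of derivatives, I would differentiate termwise (justified exactly as in Appendix B of \cite{cd} and the analogous argument in \cite{cdy}, which bound the operator norms of the coefficients $\chi A_{j,k}\chi$ and $\chi A'_{j,k}\chi$ so that the differentiated series still converges uniformly on compact subsets of $U$). After $m$ differentiations, each term in the first sum picks up a factor that is a polynomial in the exponent times $\lambda^{2(j+k\mu_m)-m}$, and similarly for the second sum. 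Since the smallest exponent appearing in either sum is at least $\min\{2(j_1+k_1\mu_m),\,2(j_2+k_2\mu_M)\}>N\ge m$, every term of $G^{(m)}$ has a strictly positive power of $\lambda$, and hence remains bounded as $\lambda\to 0$ inside $U$. Combined with the uniform bound on $|\lambda|<c_0$, this gives $G^{(m)}\in L^\infty(U;\mathcal{B}(\mathcal D^p,\mathcal D^q))$ for $m=0,1,\ldots,N$.

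With these bounds in hand, Lemma \ref{l:rem} applied to $G$ immediately yields
\[
\Bigl\|\lim_{\delta\to 0^+}\int_{\gamma(\delta,c)}e^{-it\lambda}G(\lambda)\,d\lambda\Bigr\|_{\mathcal D^p\to \mathcal D^q} = O(t^{-N}),
\]
which after applying the operator to $f$ gives the desired estimate \eqref{eq:tail2}. The only step that requires genuine (but routine) checking is the termwise differentiability and uniform boundedness of the $N$-th derivative of $G$ near $\lambda=0$; this is the main technical obstacle, and it is handled by the explicit control on the operator norms of the expansion coefficients provided by \cite[Theorems 2, 3]{cdy} together with the strict inequality $N<\min\{2(j_1+k_1\mu_m),\,2(j_2+k_2\mu_M)\}$, which ensures no negative powers of $\lambda$ appear after differentiation.
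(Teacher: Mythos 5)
Your proposal is correct and follows essentially the same route as the paper, which simply notes that Lemma \ref{l:tail2} ``is proved in the same way as Lemma \ref{l:tail}'': interchange sum and integral by uniform convergence, bound the derivatives of the tail $G$ up to order $N$ by termwise differentiation (using that the minimal exponent exceeds $N$ so no negative powers of $\lambda$ appear), and apply Lemma \ref{l:rem}.
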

%\begin{proof}
%    This follows from Lemma \ref{l:rem}. The analyticity assumption of Lemma \ref{l:rem} is satisfied by the uniform convergence of the series \cite[Theorem 3]{cdy}. Define $G$ to be the sum of the two series in equation \eqref{eq:tail2}. The standard proof of termwise differentiation of power series is valid (see \cite[Lemma B.1]{cd}) and we have
%    \[ 
%    \sup\{\|G^{(m)}(\lambda)\|_{L^2\to L^2} \colon  i\lambda \not \in[0,\infty) \text{ and }|\lambda|<c_0, \ m  = 0, \dots, N\} = O(1).
%    \] 
%\end{proof}
With this lemma, we obtain
\[
\Big\|\chi u(t) - \frac 1 {2\pi} \chi A_{0,1} \chi f\lim_{\delta \to 0^+}\int_{\gamma(\delta,c)} e^{-it \lambda} \lambda^{2\mu_m}\, d\lambda\Big\|_{\mathcal D^q} = O(t^{-1-2\mu_{M}})\|f\|_{\mathcal D^p} + O(t^{-1-4\mu_{m}})\|f\|_{\mathcal D^p}.
\] 
When combined with \eqref{e:asy2}, this completes the proof in this case.

\noindent
\emph{Case (3).} 
When $\beta \in \Integers$, this case is  the same as the case of Section \ref{s:convob2}, as the powers in the resolvent expansions are the same.

\subsection{Laplacians on cones} \label{s:cones}

All the assumptions of Theorem \ref{t:main} can be checked using the explicit resolvent formulas in terms of Bessel functions as in \cite[Section 7]{MuSt} or \cite[Theorem 2.1]{by20}. More precisely, if $\varphi(r,y) = \sum_{j=1}^\infty \varphi_j(r)\sin(\alpha jy)$, then
\[
R(\lambda)\varphi(r,y) = \frac {\pi i} {2} \sum_{j=1}^\infty \int_0^\infty J_{\alpha j}(\lambda \min(r,\tilde r)) H^{(1)}_{\alpha j}(\lambda \max(r,\tilde r)) \varphi_j(\tilde r) \tilde r \, d \tilde r\sin(\alpha jy).
\] 

Assumption (1) holds with $N_E=0$.  Assumption 
(2) holds for any 
$\chi \in C_c^\infty(X)$ and $c_0>0$. To check Assumption (3), if $j$ is not large, use the large argument Bessel function expansions of \cite[10.17]{DLMF}. If $j$ is large, then argue as in \cite[Appendix A]{dgs}; the calculations done there for $\lambda\gg 1$ can be directly adapted to $\re \lambda \gg 1$ and $|\im \lambda|$  bounded, because Olver's uniform expansions remain valid \cite[10.20(ii)]{DLMF}. They yield, analogously to \cite[(A.10)]{dgs}, 
\[
|\chi(r) J_{\alpha j}(\lambda \min(r,\tilde r)) H^{(1)}_{\alpha j}(\lambda \max(r,\tilde r)) \chi(\tilde r)| = O (1/\sqrt{j^2 + |\lambda|^2}).
\]
Thus $\|\chi R(\lambda)\chi\|_{L^2 \to L^2}= O(|\lambda|^{-1})$, and Assumption (3) follows as in Section \ref{s:convob2}. 

The expansion in Assumption (4) can be obtained from Bessel function series as in \cite[Section 7]{MuSt}.  The 
condition that $\alpha$ is Diophantine means there is $m$ such that  $|\alpha - \frac n j|^{-1} = O(j^m) $ for any positive integers $n$ and $j$, and hence $|\sin(\pi \alpha j)|^{-1} = O(j^{m-1})$.
 Thus the set
$$\left\{ \left( (2\kappa)^{\alpha j}\sin(\pi \alpha j) \Gamma(1+\alpha j)\right)^{-1}| \; j\in \Natural\right\} $$
is bounded for any fixed $\kappa>0$. (See \cite[Section~6.5]{baker} for Liouville's proof that algebraic numbers are Diophantine, and the rest of that chapter as well as page 8-4 of \cite{milnor} for an introduction to Diophantine approximation. See \cite{sal} for a proof that $\pi$ is Diophantine.) Then it follows
from
 \cite[Theorem 7.6]{MuSt}
%as in the proof of Lemma 4.1 of \cite{cdy}, and its form depends on  $\pi/L$. 
%If $\pi/L \in \mathbb Q$, then the resolvent
%expansion contains logarithmic factors,
%but if $\pi/L \not \in \mathbb Q$, 
that
\[
\chi R(\lambda) \chi = \sum_{j=0}^\infty \sum_{\ell=0}^\infty \chi A_{j,\ell} \chi \lambda^{2 \alpha j + 2 \ell},  \qquad A_{1,0} \varphi(r,y) =  \frac {i \pi(1+ i \cot(\pi \alpha ))r^{\alpha}}{  2^{1+2 \alpha} \Gamma(1+\alpha)^2}  \int_0^\infty \tilde r^{1+\alpha} \varphi_1(\tilde r)\, d \tilde r\sin(\alpha y).
\]  
We note that if instead we had $\alpha\in \mathbb Q$, then there would be logarithmic terms in the expansion of the resolvent as well.

%Note that to get $\|\chi R(\lambda)\chi - \sum_{j=0}^J \sum_{\ell=0}^M \chi A_{j,\ell} \chi \lambda^{\frac {2 \pi j}L + 2 \ell}\| < \varepsilon$,  by orthogonality of modes we may first take $J$ such that if $j \ge J$ then $\|\sum_{\ell=0}^\infty \chi A_{j,\ell} \chi \lambda^{\frac {2 \pi j}L + 2 \ell}\| < \varepsilon/2$, and next take $M$ such that if $j<J$ then $\|\sum_{\ell=M}^\infty \chi A_{j,\ell} \chi \lambda^{\frac {2 \pi j}L + 2 \ell}\| < \varepsilon/2$.

Now we write $f(r,y)=\sum f_j(r)\sin(\alpha j y)$.
Applying Theorem \ref{t:main} and Lemmas \ref{l:asylog} and \ref{l:rem} gives
\[
\Big \| \chi u(t) - \frac 1 {2\pi} \chi A_{1,0} \chi f_1 \lim_{\delta \to 0^+} \int_{\gamma(\delta,c)} e^{-it\lambda} \lambda^{2 \alpha}\, d\lambda \Big\|_{L^2} = O(t^{- 3 - 2 \alpha}) + O(t^{-1 - 4 \alpha}),
\]
which, together with the duplication identity \cite[5.5.5]{DLMF} $ \sqrt \pi \Gamma(1+2\alpha)=2^{2\alpha}\Gamma(\frac 12 + \alpha)\Gamma(1+\alpha)$, implies Theorem \ref{t:cones}.

\subsection{Schr\"{o}dinger operators with compactly supported potentials}\label{s:SO2}

We prove Theorem \ref{t:schroex} in this subsection.

Let $P=-\Delta +V$ on $\mathcal{H}=L^2(\Real^2)$, with $V\in C_c^\infty(\Real^2;\Real)$ and $\mathcal{D}=H^2(\Real^2).$
Considering the hypotheses of Theorem \ref{t:main}, it is well known that (1)-(3) hold, but we outline an argument here.  We write
$R_0(\lambda)$ for the free resolvent, that is, the resolvent of $-\Delta$ on $\Real^2$. Then for $\chi \in C_c^\infty(\Real^2)$ satisfying $\chi V=V$
\begin{equation}\label{eq:BiSc}
    \chi R(\lambda)\chi = \chi R_0(\lambda)\chi (I+VR_0(\lambda)\chi)^{-1}.
\end{equation}
Since $\chi R_0(\lambda)\chi$ has an analytic
continuation to $\{\lambda:\im \lambda>0\}\cup\{ \lambda:\; \re \lambda \not =0\}$, by analytic Fredholm theory the cut-off resolvent
$\chi R(\lambda)\chi$ has a meromorphic continuation to this set. There is a constant $A$ such that in this region 
$\| \chi R_0(\lambda)\chi \|_{L^2 \to L^2} \leq e^{A (\im \lambda)_-}/|\lambda| $, where $(\im \lambda)_-=\max(0, -\im \lambda)$.
Then for $\im \lambda > -C$, $\re \lambda \gg 0$, $\| VR_0(\lambda)\chi\|<1/2$ and 
so \eqref{eq:BiSc} implies $\chi R(\lambda)\chi $ is bounded for such $\lambda$.
 This proves (3).  Combining the fact that $R(\lambda)$
has a meromorphic continuation with \eqref{eq:BiSc} and the expansion of 
the resolvent near $0$ of \cite[Theorem 1]{cd2}  proves Assumptions (1) and (2).
Theorem 1 of \cite{cd2} or the results of 
\cite{jn} show that (4) holds, and give more detailed information about the expansion, which we recall below.

Combining \cite[Theorem 1]{cd2} and \cite[Proposition 4.1]{cd2} yields that, for some $M \in \{0, 1,2\}$, near $\lambda =0$ we have
\begin{equation}\label{eq:soex}
    R(\lambda)=A_{-2,0}\lambda^{-2}+ \left(\sum_{m=1}^M \frac{U_{\omega_m}\otimes U_{\omega_m}}{\log (\lambda b_{-2,-1,m})}\right)\lambda^{-2} + A_{0,1} \log \lambda + \sum_{j=0}^\infty \sum_{k=-\infty}^{k_0(j)} A_{2j,k}\lambda^{2j} (\log \lambda)^k.
\end{equation}
Here $U_{\omega_m}\otimes U_{\omega_m} f= U_{\omega_m}\int f \overline{U}_{\omega_m}dx $, $U_{\omega_m}\in \mathcal{G}_{-1}\setminus \mathcal{G}_{-2}$ and 
$\arg b_{-2,-1,m}=-\pi/2$.
Moreover, $A_{-2,0}$ maps $L^2_c(\Real^2)$ to $\mathcal{G}_{-2}$, $A_{0,1}$ maps $L^2_c(\Real^2)$ to the span of $\mathcal{G}_{-2}\setminus \mathcal{G}_{-3}$ and $\mathcal{G}_{0}\setminus\mathcal{G}_{-1}$, $k_0(j)<\infty$, and $k_0(0)=0.$  

For $0<m\leq M$ set
\begin{equation}\label{eq:Jm}
\Ji_m(t) =\lim_{\delta \rightarrow 0^+}\frac{1}{2\pi}\int_{\gamma(\delta, \infty)}e^{-it\lambda}\lambda^{-2}(\log (\lambda b_{-2,-1,m}))^{-1} d\lambda.
\end{equation}
By \eqref{e:asy3} and \eqref{e:asy2} with $t=1$, $\Ji_m(t)= t( (\log t)^{-1}+O((\log t)^{-2}))$ as $t\rightarrow \infty$. By \cite[ Proposition 5.12]{cd2}, $\Ji_1 = \Ji_2$ whenever $V$ is radial; it is not clear whether or not this is the case more generally.
We will also use the $\nu=-2$ case of \eqref{eq:k=0} 
and 
$$\lim_{\delta \rightarrow 0^+}\int_{\gamma(\delta, c)}e^{-it\lambda} \log \lambda d\lambda = -2\pi t^{-1} +O(e^{-ct}).$$
With these observations and the expansion \eqref{eq:soex}, Theorem \ref{t:schroex} follows from Theorem \ref{t:main}
much as the proof of Theorem \ref{t:obex} in Section \ref{s:convob2}.

\end{document}